\newtheorem{thm}{Theorem}[section]
\newtheorem{lem}[thm]{Lemma}
\newtheorem{prop}[thm]{Proposition}
\newtheorem{rem}[thm]{Remark}
\def\be#1\ee{\begin{equation}#1\end{equation}}
\newcommand{\bea}{\begin{eqnarray}}
\newcommand{\eea}{\end{eqnarray}}
\newcommand{\beaa}{\begin{eqnarray*}}
\newcommand{\eeaa}{\end{eqnarray*}}
\newcommand{\bei}{\begin{itemize}}
\newcommand{\eei}{\end{itemize}}
\newcommand{\bee}{\begin{enumerate}}
\newcommand{\eee}{\end{enumerate}}
\def\P{{\mathbb{P}}}
\def\R{\mathbb{R}}
\def\E{\mathbb{E}\,}
\def\Z{{\mathbb Z}}
\def\N{{\mathbb N}}
\newcommand{\eps}{\varepsilon}
\newenvironment{proof}[1][] {\noindent {\bf Proof#1:} }{\hspace*{\fill}$\square$\medskip\par}
\def\ed#1{ {\mathbf 1}_{ \{#1  \}}}             
\def\cov{\textrm{cov}}
\def\A{{\mathcal A}}
\def\C{{\mathbb C}}
\def\DD{{\mathcal D}}
\def\FF{{\mathcal F}}
\def\KK{{\mathcal K}}
\def\MM{{\mathcal M}}
\def\QQ{{\mathcal Q}}
\def\TT{{\mathcal T}}
\def\XX{{\mathcal X}}
\def\sgn{{\textrm{sgn}}}
\def\i{{\bf i}}
\begin{document}

\title{\bf $L_2$-Small Deviations for Weighted Stationary Processes}
\author{
   Mikhail Lifshits
   \footnote{St.Petersburg State University, Russia, St. Petersburg,
   Universitetskii pr. 28,
   email {\tt mikhail@lifshits.org}.}
   \footnote{MAI, Link\"oping University.}
    \and
    Alexander Nazarov
    \footnote{St.Petersburg Department of Steklov Institute of Mathematics,
    email {\tt al.il.nazarov@gmail.com}.}
    \footnote{St.Petersburg State University, Russia, St. Petersburg, Universitetskii pr. 28.}
     }
\date{\today}

\maketitle

\begin{abstract}
We find logarithmic asymptotics of $L_2$-small deviation probabilities for
weighted stationary Gaussian processes (both for real and complex-valued) having power-type
discrete or continuous spectrum. Our results are based on the spectral theory
of pseudo-differential operators developed by Birman and Solomyak.
\end{abstract}

{\bf Keywords:}\ small deviations; spectral asymptotics; stationary processes.

{\bf AMS Classification:}\ 60G15,    60G10, 60G22, 47G30.

\section{Introduction}
\label{s:intro}

Let $(Y(t))_{t\in T}$ be a random process defined
on some parametric measure space $(T,\mathfrak m)$. Many studies have
been devoted to the asymptotic behavior of its $L_2$-small deviation
probabilities
\[
   \P\Big( ||Y||_2^2= \int\limits_T |Y(t)|^2 \mathfrak m(dt)
   \le \eps^2\Big), \qquad \textrm{ as } \eps\to 0,
\]
see e.g. \cite{DLL,GH1,GH2,Naz09,Naz11,NazNik,NazPus,Pus},
to mention just a small sample. The importance of small deviation probabilities
in a broader context and a large number of their applications
are described in the surveys \cite{LiSha,Lif}; for an extensive
up-to-date bibliography see \cite{Bib}.

In this work, we explore $L_2$-small deviation probabilities for
weighted stationary Gaussian processes having power-type spectrum.
Our goal is to relate the asymptotics of small deviation probabilities
with that of the spectrum.
From the historical point of view our results are closely related to those
on fractional Brownian motion and its relatives, see e.g.
\cite{Bro,Gen_IRMA,Gen_mem,LifLin3}. In terms of such processes with stationary
increments our message is that the spectral asymptotics is relevant to the small
deviation behavior but the self-similarity is not.

In Section \ref{s:per} we consider periodic processes that correspond to discrete
spectrum, while Section \ref{s:nonper} handles continuous time processes with spectral density.
The final results of two sections are quite similar, although intermediate technical details
differ.

Our results are based on the spectral theory of pseudo-differential operators
developed by Birman and Solomyak \cite{BS1,BS2}. This approach was initiated in \cite{HLN},
where a similar problem was considered in the discrete time setting.
In passing, in Section \ref{s:hln} we prove a slightly stronger version of one result
from \cite{HLN}.

The spectral results that we use are not sensible to the symmetry of the spectral measure.
Therefore, it is very natural to apply them to the complex-valued processes. In this context
{\it proper} Gaussian processes are particularly convenient because their distributions are
determined by the spectra of the corresponding covariance operators.
Our main results are logarithmic asymptotics of $L_2$-small deviation probabilities for
weighted stationary Gaussian processes  having power-type spectrum
in Theorem \ref{t:sd_w_per_real} (real-valued periodic process), in Theorem \ref{t:sd_w_per_compl}
(complex-valued proper periodic process), in Theorem \ref{t:sd_w_real} (real-valued process with
continuous spectrum), and in Theorem \ref{t:sd_w_compl} (complex-valued proper process with
continuous spectrum).

For the reader's convenience, in Appendix we formulate some particular cases of deep results
of \cite{BKS}--\cite{BS2} used in our proofs.

We denote by ${\mathcal F}$ the Fourier transform
\[
{\mathcal F}u(\xi)= \int\limits_{\R} \exp(-{\bf i}\xi x)u(x)~\!dx.
\]
For any two sequences $a_k, b_k$ the standard notation
$a_k\sim b_k$ means that $\lim_{k\to\infty} \tfrac{a_k}{b_k}=1$.

\section{Periodic stationary processes}
\label{s:per}
\setcounter{equation}{0}

\subsection{Spectral representations}

We first recall the necessary information on the spectral representations of
stationary periodic processes.

Let $X=\{X(t),t\in\R\}$ be a complex-valued $2\pi$-periodic centered second
order mean-square continuous stationary process.
Then its covariance function admits a spectral representation
\[
   K_X(s-t) := \cov(X(s),X(t))= \sum_{k\in\Z} \mu_k \,  e^{\i k (s-t)},
   \qquad s,t\in\R,
\]
where $\mu:=(\mu_k)_{k\in\Z}$ is a finite non-negative measure on $\Z$ called
the spectral measure of $X$.

The spectral representation  of $X$ itself writes as
\be\label{specX}
   X(t) =\sum_{k\in\Z} \sqrt{\mu_k} \, \xi_k \,  e^{\i k t},
\ee
where $\xi_k$ are centered uncorrelated complex random variables with
$\E|\xi_k|^2=1$.

Just for completeness, recall a straightforward reformulation for real-valued
processes. Let denote $\xi_k:=\xi_k^{(re)}+\i \xi_k^{(im)}$.
The process $X$ is real-valued iff
\begin{itemize}
\item $\xi_0$ is real;
\item $\mu_{-k}=\mu_k$ for all $k>0$;
\item $\xi_{-k}=\overline{\xi_k}$ for all $k>0$;
\item $\E|\xi_k^{(re)}|^2=\E|\xi_k^{(im)}|^2=1/2$ for all $k\in \Z$;
\item the real random variables $\left(\xi_0, (\xi_k^{(re)},\xi_k^{(im)})_{k>0}\right)$
are uncorrelated.
\end{itemize}
In this case \eqref{specX} writes as
\begin{eqnarray} 
  X(t)
\label{specXr}
   &=& \sqrt{\mu_0}\, \xi_0 +  \sum_{k=1}^\infty \sqrt{\mu_k} \,
        \left(\sqrt{2}\xi_k^{(re)} [\sqrt{2}\cos(kt)]
        - \sqrt{2}\xi_k^{(im)}[\sqrt{2}\sin(kt)] \right),
\end{eqnarray}
where the random variables $\sqrt{2}\xi_k^{(re)}, \sqrt{2}\xi_k^{(im)}$ have unit variance.

\subsection{Covariance operators and their factorization}

Let $\nu(du):=\tfrac{du}{2\pi}$ be the normalized Lebesgue measure on $[0,2\pi]$.
In the following, we will consider $X$ as a random element of $L_2([0,2\pi],\nu)$.
From this point of view, equations \eqref{specX} and \eqref{specXr} represent
the orthogonal expansions of $X$ with respect to the orthonormal bases
$(e^{\i k t})_{k\in \Z}$ and
$\left\{1,(\sqrt{2}\cos(kt),\sqrt{2}\sin(kt))_{k\ge 1}\right\} $,
respectively.
The elements of these bases are eigenvectors of the corresponding covariance operator
$\KK_X$ in $L_2([0,2\pi],\nu)$ and the corresponding eigenvalues are $\mu_k$.

The orthogonal expansions generate  natural decompositions of $\KK_X$. Let
$e_k:=\exp(\i k\cdot), k\in\Z$. Then the operator square root of $\KK_X$ is defined by the
formula $\DD e_k:=\sqrt{\mu_k}\, e_k$,  $k\in\Z$. Operator $\DD$ is bounded, self-adjoint,
satisfies $\DD\,\DD =\DD\, \DD^*=\KK_X$, and can be interpreted as a convolution operator with the kernel
\[
  \DD(s):= \sum_{\ell \in \Z} \sqrt{\mu_\ell} \,  e_\ell(s).
\]
Indeed, for every $k\in\Z$ and $s\in[0,2\pi]$ we have
\begin{eqnarray*}
   \int\limits_0^{2\pi} \DD(s-t)\, e_k(t)\, \nu(dt)
   &=& \sum_{\ell \in \Z}   \sqrt{\mu_\ell}\, \int\limits_0^{2\pi}  e_\ell(s-t)\, e_k(t)\, \nu(dt)
\\
   &=& \sum_{\ell \in \Z}  \sqrt{\mu_\ell} \ e_\ell(s) \int\limits_0^{2\pi}  e_{k-\ell}(t) \, \nu(dt)
   =  \sqrt{\mu_k}\ e_k(s).
\end{eqnarray*}

In the following we are interested in the small ball behavior of the {\it weighted} $L_2$-norm
\be\label{2pi}
  \int\limits_{0}^{2\pi} q(t) |X(t)|^2 dt = 2\pi ||\sqrt{q}X||^2_{2,\nu}
\ee
with some weight $q\in L_1[0,2\pi]$.

We have a decomposition for covariance operator
\be\label{decomp-cov}
   \KK_{\sqrt{q} X} = \QQ\KK_X \QQ= \QQ\DD\DD \QQ =: \TT^*\TT,\qquad \TT=\DD\,\QQ,
\ee
where $\QQ$ stands for the self-adjoint multiplication operator related to the function
$\sqrt{q}\in L_2[0,2\pi]$.
We claim that $\TT$ is the Hilbert--Schmidt operator although $\QQ$ need not be even bounded.
Indeed, since
$\sqrt{q}\in L_2[0,2\pi]$, it admits a Fourier series expansion
\[
   \sqrt{q}(t) := \sum_{m \in \Z} q_m \, e^{\i m t},
\]
with $(q_m)\in\ell_2(\Z)$.
Then we have
\[
  \TT e_\ell= \DD\left[ \sum_{k \in \Z} q_{k-\ell} e_k  \right]=
\sum_{k \in \Z} \sqrt{\mu_k}\, q_{k-\ell}\, g_\ell e_k,
\]
and therefore
\[
\sum_{\ell\in \Z} ||\TT e_\ell||_{2,\nu}^2 =  \sum_{\ell\in \Z}\sum_{k \in \Z} \mu_k \, |q_{k-\ell}|^2
=  \mu(\Z)\ ||q||_{1,\nu}.
\]

For the study of logarithmic asymptotics of small deviation probabilities, we need to know the
main term of the eigenvalue asymptotics for $\KK_{\sqrt{q}X}$, see \cite{Naz09}.
Since $K_{\sqrt{q}X}$ is non-negative, its eigenvalues $\lambda_n(\KK_{\sqrt{q}X})$ coincide
 with its singular values $s_n(\KK_{\sqrt{q}X})$.
We always label the eigenvalues and singular values in non-increasing order
counting multiplicity.

We study the {\it distribution function of singular values}
\[
  N(\lambda;\KK_{\sqrt{q}X}) :=  \#\{n: s_n(\KK_{\sqrt{q}X})\ge \lambda\},\
  \quad \lambda >0,
\]
and its asymptotics as $\lambda\to 0$. For a compact operator $T$ we introduce the notation
\[
\Delta_\theta(T) = \lim_{\lambda\to 0_+} \lambda^\theta N(\lambda; T).
\]
The following relation is important in what follows:
\be \label{Nass}
    \Delta_\theta(T) =\Delta \quad
    \Longleftrightarrow \quad
    \lim\limits_{n\to\infty} n^{\frac 1\theta}s_n(T)= \Delta^{\frac 1\theta}.
\ee

\subsection{Spectral asymptotics}

From now on we assume that the spectral measure has a power-like decay
\be \label{muk}
   \lim\limits_{k\to \pm\infty} |k|^r\mu_k=M_{\pm},
\ee
with some $r>1$ and $M_{\pm}\ge0$, $M_+ + M_->0$.
Assumption \eqref{muk} is typical of the literature on small deviations of Gaussian
processes; see for example \cite{DLL}.

\begin{lem}\label{Lemma21}
Let the spectral measure of $X$ satisfy $(\ref{muk})$, and let $q\in L_1[0,2\pi]$.
Then
\be \label{s2}
   \lambda_n(\KK_{\sqrt{q}X})=s_n(\KK_{\sqrt{q}X})\sim
   \left(\frac{M_-^{\frac 1r}+ M_+^{\frac 1r}}{2\pi}
   \int\limits_{0}^{2\pi} q(t)^{\frac 1r} \, dt\right)^{r}
   \,  n^{-r}, \qquad \textrm{as } n\to\infty.
\ee
\end{lem}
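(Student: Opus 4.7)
My plan is to combine the factorization $\KK_{\sqrt{q}X}=\TT^*\TT$ from (\ref{decomp-cov}) with a Birman--Solomyak type asymptotic for the singular values of the pseudo-differential operator $\TT=\DD\,\QQ$, and then square the result. As a first step I would note that, $\KK_{\sqrt{q}X}$ being non-negative self-adjoint, its eigenvalues coincide with its singular values and moreover $s_n(\KK_{\sqrt{q}X})=s_n(\TT)^2$. So by (\ref{Nass}), the conclusion (\ref{s2}) amounts to proving
$$
\Delta_{2/r}(\TT)=\frac{M_+^{1/r}+M_-^{1/r}}{2\pi}\int_0^{2\pi} q(t)^{1/r}\,dt.
$$

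Next I would view $\TT$ as an operator of the form $b(x)p(D)$ on the torus: multiplication by the weight $b=\sqrt{q(t)}$ combined with the Fourier multiplier $\DD$ of symbol $p(k)=\sqrt{\mu_k}$. By (\ref{muk}) this symbol is of asymptotic power type, $p(k)\sim M_\pm^{1/2}|k|^{-\alpha}$ with $\alpha=r/2$. The Birman--Solomyak theorem recalled in the Appendix from \cite{BS1,BS2} applies to such operators and yields
$$
\Delta_{1/\alpha}\bigl(b(x)p(D)\bigr)=\frac{1}{2\pi}\int_0^{2\pi}\bigl(p_+^{1/\alpha}+p_-^{1/\alpha}\bigr)|b(x)|^{1/\alpha}\,dx.
$$
Substituting $\alpha=r/2$, $p_\pm=M_\pm^{1/2}$, $b=\sqrt{q}$ gives exactly the required identity. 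Squaring the resulting asymptotic $s_n(\TT)\sim\Delta_{2/r}(\TT)^{r/2}\,n^{-r/2}$ delivers (\ref{s2}).

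The hard part will be matching the weak regularity of the data to the hypotheses of Birman--Solomyak: the symbol $\sqrt{\mu_k}$ is specified only through its $k\to\pm\infty$ asymptotics, while the weight lies merely in $L_1[0,2\pi]$, so $\sqrt{q}$ is neither bounded nor bounded away from zero. Both issues should yield to approximation arguments. For the symbol, I would split $\sqrt{\mu_k}=\widetilde p(k)+\rho(k)$ with $\widetilde p$ a smooth homogeneous symbol equal to $M_\pm^{1/2}|k|^{-r/2}$ for large $|k|$ and $\rho(k)=o(|k|^{-r/2})$; the remainder piece is a compact perturbation of $\TT$ whose singular values decay strictly faster than $n^{-r/2}$ and are therefore invisible to $\Delta_{2/r}$. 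For the weight, I would approximate $q$ from above by the bounded truncations $q_N=q\wedge N$ (and, if the cited version of the theorem requires it, from below by $q_N\vee N^{-1}$), apply the theorem in the bounded case, and pass to the limit using the min--max characterisation of $s_n$ together with the continuity of $q\mapsto\int_0^{2\pi} q^{1/r}\,dt$ under monotone $L_1$-approximation (available by dominated convergence since $r>1$ forces $q^{1/r}\in L_1$ whenever $q\in L_1$).
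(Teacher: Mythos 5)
Your overall strategy coincides with the paper's: factorize the covariance operator, identify a pseudo-differential model operator with power-type symbol and $L_2$ weight, compute $\Delta_{1/r}$ via Birman--Solomyak, and dispose of the $o(|\xi|^{-r})$ remainder by a perturbation argument. But there is a genuine gap at the central step. You invoke "the Birman--Solomyak theorem recalled in the Appendix" for an operator $b(x)p(D)$ \emph{on the torus}, whose symbol is the sequence $\sqrt{\mu_k}$. Proposition \ref{BS1} is a statement about operators on $L_2(\R)$ built from the \emph{continuous} Fourier transform, with a symbol defined on $\R$; it does not apply to a Fourier multiplier on $\Z$, i.e.\ to a periodic convolution. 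Bridging this is precisely where the paper's proof does its work: it regards $\KK_{\sqrt{q}X}$ as an integral operator on $\R$ with kernel $\tfrac{1}{2\pi}K_X(s-t)$ restricted by the compactly supported weight $b=\sqrt{q}\,\mathbf{1}_{[0,2\pi]}$, localizes the periodic kernel to $[-2\pi,2\pi]$ by cut-offs as in \eqref{decomp}, proves via the Poisson summation formula that the continuous Fourier transform of the localized kernel inherits the power asymptotics \eqref{op} from the discrete condition \eqref{muk}, and kills the wrap-around pieces $\KK_\pm$ using the disjoint-support statement (part \textbf{2} of Proposition \ref{BS2}). None of this is automatic, and your proposal contains no substitute for it; as written, the asserted formula for $\Delta_{1/\alpha}(b(x)p(D))$ is not justified by anything quoted in the paper.

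Two further points. First, Proposition \ref{BS1} requires \emph{two} compactly supported $L_2$ weights $b,c$; the one-sided operator $\TT=\DD\,\QQ$ is not literally of that form (taking $c\equiv 1$ is not allowed), so you should work at the level of $\KK_{\sqrt{q}X}=\QQ\,\DD^2\,\QQ$ with $b=c=\sqrt{q}\,\mathbf{1}_{[0,2\pi]}$, exactly as the paper does; your reduction to $\Delta_{2/r}(\TT)$ is equivalent via $s_n(\TT^*\TT)=s_n(\TT)^2$ but then points you at a version of the theorem that is not in the toolkit. Second, your truncation of the weight $q\wedge N$ is unnecessary: Proposition \ref{BS1} only needs $b\in L_2$ with compact support, and $q\in L_1[0,2\pi]$ already gives $\sqrt{q}\in L_2[0,2\pi]$. (Your limiting argument for the truncations is also under-specified: monotonicity of individual singular values does not by itself transfer the asymptotic constant; one needs a quantitative perturbation bound such as Proposition \ref{BS3} applied to the difference operator. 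This is exactly the mechanism the paper uses in the continuous-spectrum case, Lemma 4.1, where unbounded supports genuinely force an approximation.)
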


\begin{proof}
We proceed similar to \cite{HLN} where we used quite general results
of Birman and Solomyak \cite{BS1, BS2}.

We can consider $\KK_{\sqrt{q}X}$ as an operator in $L_2(\R)$
\[
  (\KK_{\sqrt{q}X}u)(s)= b(s)\int\limits_{\R}\frac 1{2\pi}\,K_X(s-t)b(t)u(t)\,dt,
\]
where $b=\sqrt{q}\cdot{\mathbf 1}_{[0,2\pi]}$. Notice that since we are working on the interval of length $2\pi$,
it is sufficient to consider only the restriction of our periodic
function $K_X$ to $[-2\pi,2\pi]$.

Let $h$ be a smooth cut-off function such that $h(t) = 1$ if $t\in [\tfrac{3\pi}{2},2\pi]$
and $h(t) = 0$ if $t\in [-2\pi,\pi]$. Then it follows that the function
$h_0(s):=1- h(s)-h(-s)$ equals one on $[-\pi,\pi]$ and vanishes outside
of the interval  $[-\tfrac{3\pi}{2},\tfrac{3\pi}{2}]$.
We decompose the kernel $K_X$ as follows:
\be \label{decomp}
   \frac 1{2\pi}\,K_X(s)=\frac 1{2\pi}\,K_X(s)\big[h(s)+h(-s)+h_0(s)\big]
   =:K_+(s)+K_-(s)+K^{(0)}(s),
\ee
and claim that the function $K^{(0)}$ satisfies\be \label{op}
   \lim\limits_{\xi\to\pm\infty}|\xi|^r\cdot\FF{K^{(0)}}(\xi)= M(\sgn(\xi)),
\ee
where $M(\pm1)=M_{\pm}$.
Indeed, we have
\[
   \frac 1{2\pi}\FF(K_X\cdot h_0)(\xi)=\frac 1{2\pi}\sum_{k\in\Z} \mu_k\FF h_0(\xi-k),
\]
and then by splitting the series into two sums, we obtain
\[
  \FF{K^{(0)}}(\xi)=\Sigma_1+\Sigma_2
  :=\left(\sum_{|k-\xi|\le\sqrt{\xi}}+\sum_{|k-\xi|>\sqrt{\xi}}\right)
  \frac {\mu_k}{2\pi}\,\FF h_0(\xi-k).
\]
Since $\FF h_0$ rapidly decays at infinity, we have
$\Sigma_2=o(|\xi|^{-r})$ as $|\xi|\to\infty$. Furthermore, (\ref{muk}) implies
that
\begin{eqnarray*}
  \Sigma_1 &=& \frac {M(\sgn(\xi))}{2\pi}\, |\xi|^{-r}
  \sum_{|k-\xi|\le\sqrt{\xi}}\FF h_0(\xi-k)+o(|\xi|^{-r})
  \\
  &=& \frac {M(\sgn(\xi))}{2\pi}\, |\xi|^{-r}\sum_k\FF h_0(\xi-k)
     +o(|\xi|^{-r})=M(\sgn(\xi)) |\xi|^{-r}+o(|\xi|^{-r})
\end{eqnarray*}
by the Poisson summation formula (see, e.g.,
\cite[Ch. II, Sect. 13]{Zyg}), so that (\ref{op}) follows.

 Now we introduce a model operator
\[
  (\A u)(s)= b(s)\FF^{-1} \big(a(\xi)\FF(bu)(\xi)\big),
\]
with
 \[
  a(\xi)=\zeta(\xi)M(\sgn(\xi))|\xi|^{-r},
 \]
where $\zeta$ is a  smooth cut-off function vanishing in a neighborhood of the origin. Since $b\in L_2$,
Proposition \ref{BS1} can be applied to the operator $\A$. This gives
\[
   \Delta_{\frac{1}{r}}(\A)
 =  \frac 1{2\pi} \int\limits_0^{2\pi} \int\limits_{\R\backslash\{0\}}
       \ed{|b(t)|^2\, |M(\sgn(\xi))| \, |\xi|^{-r} \ge 1} \,d\xi dt
     =  \frac 1{2\pi} \int\limits_0^{2\pi}  q(t)^{\frac{1}{r}} \,dt
       \left( M(-1)^{\frac{1}{r}}+ M(1)^{\frac{1}{r}} \right).
\]
Furthermore, the decomposition (\ref{decomp}) generates the corresponding
operator decomposition
\[
   \KK_{\sqrt{q}X}=\KK_+ +\KK_- +\KK^{(0)}.
\]
Since the relation (\ref{op}) implies $\FF{K^{(0)}}(\xi)-a(\xi)=o(|\xi|^{-r})$ as $\xi\to\infty$,
part {\bf 1} of Proposition \ref{BS2} gives
$\Delta_{\frac{1}{r}} (\KK^{(0)}-\A)=0$.
Moreover, since
$K_X$ is $2\pi$-periodic, the singular values of $\KK_+$ coincide
with the singular values of the operator
\[
    b(s+\pi) {\mathbf 1}_{[0,\pi]}(s)
    \int\limits_{\R}
    \frac 1{2\pi}K_X(s-t)h(s+2\pi-t) b(t){\mathbf 1}_{[\pi,2\pi]}(t) u(t) \,dt.
 \]
For this operator, the support of the ``left'' weight is $[0,\pi]$, and the support
of the ``right'' weight is $[\pi, 2\pi]$. Part {\bf 2} in Proposition \ref{BS2}
gives $\Delta_{\frac{1}{r}} (\KK_+)=0$. Similarly,
$\Delta_{\frac{1}{r}}(\KK_-)=0$. By Proposition \ref{BS3} we obtain
\[
  \Delta_{\frac{1}{r}}(\mathcal K_{\sqrt{q}X})
= \Delta_{\frac{1}{r}} (\mathcal K^{(0)}) = \Delta_{\frac{1}{r}}(\mathcal A),
\]
and the equivalence in \eqref{Nass} gives (\ref{s2}).
\end{proof}

\subsection{Gaussian small deviations}

Now we transform the information about the eigenvalues into that on
small deviation asymptotic behavior. This can be done for real processes and
also for an important class of complex processes. We handle two cases separately
because the constants appearing in the results are slightly different.

\subsubsection{Real processes}

Recall that if we have a centered Gaussian random vector $Z$,\
in a real Hilbert space, and $\KK_Z$ stands for its covariance operator,
then, by the Karhunen--Lo\`eve expansion (see \cite[Section 1.4]{AsG}),
\[
    ||Z||^2= \sum_{n=1}^\infty \lambda_n \, \xi_n^2\ ,
\]
where $(\xi_n)_{n\in\N}$ is a sequence of independent
standard normal random variables and
$(\lambda_n)_{n\in\N}$ are the eigenvalues of $\KK_Z$.
Therefore, the sequence $(\lambda_n)_{n\in\N}$ determines the distribution
of $||Z||$. In particular, if
\be \label{lambda}
     \lambda_n  \sim  C\, n^{-r},
     \qquad \textrm{ as } n\to\infty,
\ee
then it is well known  from \cite[p.67]{DLL} or \cite{Zol}, that
\be \label{sd2}
   \ln \P\left(||Z|| \le \eps\right) \sim - B_r \,(C/\eps^2)^{\frac{1}{r-1}}
\ee
with $C$ from \eqref{lambda} and $B_r := \tfrac{r-1}{2}
     \left(\tfrac{\pi}{r\sin(\pi/r)}\right)^{\frac{r}{r-1}} $.
If our process $X$ is real, we can apply the formula \eqref{sd2} to $\sqrt{q}X$
considered as an element
of $L_2([0,2\pi],\nu)$ and using eigenvalue asymptotics \eqref{s2} as \eqref{lambda}.
Notice that for real processes the spectral measure is symmetric, i.e. we have
$M_+=M_-:=M$. Taking into account (\ref{2pi}) we immediately obtain the following result.

\begin{thm} \label{t:sd_w_per_real}
Let $\{X(t), t\in\R\}$ be a $2\pi$-periodic real centered
mean-square continuous stationary Gaussian process.
Assume that its spectral measure satisfies the asymptotic condition
\[
    \mu_k \sim M |k|^{-r}, \qquad \textrm{as } |k|\to \infty,
\]
with some $r>1, M>0$. Let $q$ be a summable weight.

Then we have, as $\eps\to 0$,
\[
   \ln \P\left( \int\limits_0^{2\pi} q(t) |X(t)|^2 dt \le \eps^2\right)
   \sim - \left(\frac{M^{\frac 1r}}{r\sin(\pi/r)}
   \int\limits_{0}^{2\pi} q(t)^{\frac 1{r}}  dt\right)^{\frac{r}{r-1}}\,
   \frac{(r-1)(2\pi)^{\frac{1}{r-1}}}{2\, \eps^{\frac{2}{r-1}}} \, .
 \]
\end{thm}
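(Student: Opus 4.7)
The plan is to simply combine Lemma \ref{Lemma21} (specialized to the symmetric case $M_+=M_-=M$) with the classical Donsker--Li--Linde asymptotic formula \eqref{sd2}, paying careful attention to the normalization factor $2\pi$ that relates the weighted $L_2$-norm of the theorem's statement to the norm in $L_2([0,2\pi],\nu)$ used in the spectral analysis.

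First I would observe that since $X$ is real, the spectral measure is symmetric, so $M_+=M_-=M$. Applying Lemma \ref{Lemma21} with these values gives eigenvalues
\[
   \lambda_n(\KK_{\sqrt{q}X})\sim C\, n^{-r},\qquad
   C:=\left(\frac{2M^{1/r}}{2\pi}\int_0^{2\pi} q(t)^{1/r}\,dt\right)^{\!r}
   =\left(\frac{M^{1/r}}{\pi}\int_0^{2\pi} q(t)^{1/r}\,dt\right)^{\!r}.
\]
These are the Karhunen--Lo\`eve coefficients of $\sqrt{q}\,X$ viewed as a centered Gaussian element of the real Hilbert space $L_2([0,2\pi],\nu)$, so by \eqref{sd2},
\[
   \ln\P\bigl(\|\sqrt{q}\,X\|_{2,\nu}\le\delta\bigr)\sim -B_r\,(C/\delta^2)^{1/(r-1)},
   \qquad \delta\to 0.
\]

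Next I would rewrite the event in the theorem in terms of the $\nu$-norm. By \eqref{2pi} we have $\int_0^{2\pi}q(t)|X(t)|^2\,dt=2\pi\,\|\sqrt{q}\,X\|_{2,\nu}^2$, so the event $\{\int_0^{2\pi}q(t)|X(t)|^2\,dt\le\eps^2\}$ is exactly $\{\|\sqrt{q}\,X\|_{2,\nu}\le \eps/\sqrt{2\pi}\}$. Taking $\delta=\eps/\sqrt{2\pi}$ in the display above gives
\[
   \ln\P\Bigl(\int_0^{2\pi} q(t)|X(t)|^2\,dt\le\eps^2\Bigr)
   \sim -B_r\,(2\pi C)^{1/(r-1)}\,\eps^{-2/(r-1)}.
\]

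Finally I would unfold the constants. Substituting $B_r=\tfrac{r-1}{2}\bigl(\tfrac{\pi}{r\sin(\pi/r)}\bigr)^{r/(r-1)}$ and the value of $C$, the factor $\pi$ in $B_r$ cancels the $\pi$ in the denominator of $C^{1/r}$, leaving
\[
   B_r\,(2\pi C)^{1/(r-1)}
   =\left(\frac{M^{1/r}}{r\sin(\pi/r)}\int_0^{2\pi} q(t)^{1/r}\,dt\right)^{\!r/(r-1)}
    \frac{(r-1)(2\pi)^{1/(r-1)}}{2},
\]
which is precisely the constant in the statement. There is no real obstacle here: the nontrivial work was done in Lemma \ref{Lemma21} via the Birman--Solomyak machinery, and the only delicate point in the present argument is the bookkeeping of the $2\pi$ arising from the difference between the Lebesgue measure and $\nu$, together with the algebra needed to bring $B_r\,(2\pi C)^{1/(r-1)}$ into the compact form displayed in the theorem.
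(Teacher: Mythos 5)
Your proposal is correct and follows exactly the paper's own route: the paper likewise derives Theorem \ref{t:sd_w_per_real} by specializing Lemma \ref{Lemma21} to the symmetric case $M_+=M_-=M$, feeding the resulting eigenvalue asymptotics into \eqref{sd2}, and accounting for the factor $2\pi$ from \eqref{2pi}. Your constant bookkeeping ($C$, $\delta=\eps/\sqrt{2\pi}$, and the cancellation of $\pi$ in $B_r(2\pi C)^{1/(r-1)}$) checks out and just makes explicit what the paper states as "immediately obtain".
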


\subsubsection{Examples}
Consider the Bogoliubov process \cite{Pus, San} -- a $1$-periodic centered
stationary Gaussian process (with parameter $\omega>0$) defined by
\[
  \beta^{(\omega)}(s) :=  \sqrt{\mu_0}\, \xi_0 + \sum_{k=1}^\infty \sqrt{\mu_k}
     \left( \xi_k [\sqrt{2}\cos(2\pi ks)]+ \zeta_k [\sqrt{2}\sin(2\pi ks)]\right),
     \qquad s\in \R,
\]
with independent standard normal random variables $\left((\xi_k)_{k\ge 0},(\zeta_k)_{k>0} \right)$
and $\mu_k= \tfrac{1}{\omega^2+(2\pi k)^2}$.
Define a $2\pi$-periodic process $X(t):=\beta^{(\omega)}\left(t/2\pi)\right)$, $t\in \R$.
In our notation, for the spectrum of $X$ we have $r=2$, $M=\frac{1}{(2\pi)^2}$.
By applying Theorem \ref{t:sd_w_per_real} we obtain for $q\in L_1[0,1]$
\begin{eqnarray*}
   \ln \P\left( \int\limits_0^{1} q(s) |\beta^{(\omega)}(s)|^2 ds \le \eps^2\right)
   &=&  \ln \P\left( \int\limits_0^{2\pi} q(t/2\pi) |X(t)|^2 dt \le (\eps\, \sqrt{2\pi})^2\right)
\\
   &\sim& -  \frac 18 \left(\int\limits_{0}^{1} \sqrt{q(s)}\,ds\right)^2 \eps^{-2}.
\end{eqnarray*}
In the simplest case $q(s)\equiv 1$ we have
\[
  \ln \P\left( \int\limits_0^{1} |\beta^{(\omega)}(s)|^2 ds \le \eps^2\right)
   \sim -  \frac 18 \, \eps^{-2},
\]
cf. \cite[Theorem 1]{Pus}.

For $q(s)=e^{2as}, a\not=0$, our result gives
 \[
  \ln \P\left( \int\limits_0^{1} e^{2as} |\beta^{(\omega)}(s)|^2 ds \le \eps^2\right)
\sim- \frac 1{8} \ \left[\frac{e^a-1}{a}\right]^2\  \eps^{-2},
\]
as proved in \cite[Theorem 2]{Pus}.
\medskip

Our next example is the so-called $m$-times integrated-centered Brownian bridge.
Let $B_0(\tau)$ be standard Brownian bridge on $[0,1]$. We define the sequence
of Gaussian processes
\[
       B_{\{m\}}(s)=B_{m-1}(s)-\int\limits_0^1B_{m-1}(\tau)\,d\tau;\qquad B_m(\tau)
       =\int\limits_0^{\tau} B_{\{m\}}(s)\,ds,\qquad m\in\mathbb N.
\]
It was shown in \cite[Sec. 3]{Naz11} that
\[
  B_{\{m\}}(s)=  \sum_{k=1}^\infty (2\pi k)^{-m}
     \left( \xi_k [\sqrt{2}\cos(2\pi ks)]+ \zeta_k [\sqrt{2}\sin(2\pi ks)]\right),
     \qquad s\in [0,1],
\]
with independent standard normal random variables $\left((\xi_k)_{k>0},(\zeta_k)_{k>0} \right)$.
This formula obviously defines a $1$-periodic centered
stationary Gaussian process on $\R$. Define a $2\pi$-periodic process $X_m(t)=B_{\{m\}}(t/2\pi)$.
Then for the spectrum of $X_m$ we have $r=2m$, $M=(2\pi k)^{-2m}$.
By applying  Theorem \ref{t:sd_w_per_real} we obtain for $q\in L_1[0,1]$
\begin{eqnarray*}
   \ln \P\left( \int\limits_0^{1} q(s) |B_{\{m\}}(s)|^2 ds \le \eps^2\right)
   &=&  \ln \P\left( \int\limits_0^{2\pi} q(t/2\pi) |X_m(t)|^2 dt \le (\eps\, \sqrt{2\pi})^2\right)
\\
   &\sim& -  \frac {2m-1}2 \left(\frac 1{2m\sin(\pi/2m)}
   \int\limits_{0}^{1} q(s)^{\frac 1{2m}}\,ds\right)^{\frac {2m}{2m-1}} \eps^{-\frac 2{2m-1}}.
\end{eqnarray*}
For $q(s)\equiv 1$ this result agrees with \cite[Theorem 3.2]{Naz11}.

\begin{rem}
In fact, the {\it sharp} small ball asymptotics
for these processes
were obtained in $\cite{Naz11}$ and  $\cite{Pus}$,
see also $\cite{NazPus}$
 for more general weights. However, this is strongly connected with the
fact that $\beta^{(\omega)}$ and $B_{\{m\}}$ are {\rm the Green Gaussian processes} i.e. their covariances are
the Green functions for ordinary differential operators. In general case this seems to be a
much harder problem.
\end{rem}

\subsubsection{Proper complex processes}

If we have a centered Gaussian random vector $Z$
in a complex Hilbert space, and $\KK_Z$ stands for its covariance operator,
then Karhunen--Lo\`eve expansion yields
\be \label{KL-complex}
    Z= \sum_{n=1}^\infty \lambda_n \, \xi_n \, e_n\, ,
\ee
where $(\xi_n)_{n\in\N}$ are uncorrelated complex jointly Gaussian random
variables satisfying $\E|\xi_n|^2=1$ and
$(\lambda_n, e_n)_{n\in\N}$ are the eigenpairs of $\KK_Z$.
We still have
\be \label{norm_complex}
    ||Z||^2= \sum_{n=1}^\infty \lambda_n \, |\xi_n|^2\, ,
\ee
but, unfortunately, unlike the real case, the variables $\xi_n$ need not be independent,
although they are uncorrelated. Indeed, the independence of two centered complex Gaussian
random variables $\eta_1$ and $\eta_2$ is equivalent to the {\it pair}
of relations
\[
  \begin{cases}
   \cov(\eta_1,\eta_2)= \E \eta_1\overline{\eta_2}=0;&\cr
    \E \eta_1\eta_2=0.&
  \end{cases}
\]
Therefore, the sequence $(\lambda_n)_{n\in\N}$ {\it does not} determine the distribution
of $||Z||$ in general case. For this reason, we need to restrict the consideration to
an important subclass of the variables and processes where uncorrelated variables are
independent, cf. \cite{NM, Ol}.

A complex-valued random process $(X(t))_{t\in T}$ is called centered {\it proper} (or {\it circularly})
Gaussian if
\begin{itemize}
\item For any $t_1,\dots,t_n\in T$ the  coordinate vector
$\left(X^{(re)}(t_1), X^{(im)}(t_1),\dots, X^{(im)}{(t_n)}\right)$
is a centered Gaussian vector in $\R^{2n}$;
\item $\E X(t_1) X(t_2) = 0$ for all $t_1,t_2\in T$.
\end{itemize}
We clearly have $\E X(t)=0, \forall t\in T$. Moreover,
the property $\E X(t)^2=0$ yields that the distribution of $X(t)$ in
the complex plane $\C$ is spherically symmetric.

These properties extend to the span of $X$. Let us denote
$\XX:= \overline{\textup{span}\{X(t),t\in T\}}$.
For every $Y\in \XX$ we have $\E Y=0$, $\E Y^2=0$, hence its distribution in
$\C$ is spherically symmetric Gaussian.
Moreover, for any $Y_1,Y_2\in \XX$ we have $\E Y_1Y_2=0$ and $Y_1,Y_2$ are
independent iff they are uncorrelated, i.e. $\E Y_1\overline{Y_2}=0$. This
can be easily verified by checking that their coordinates are uncorrelated.

By applying these facts to the expansion  \eqref{KL-complex}
of a proper Gaussian process $Z$, we see that the variables $(\xi_n)_{n\in\N}$
are independent and spherically symmetric. Therefore, \eqref{norm_complex} becomes
\[
    ||Z||^2= \sum_{n=1}^\infty \frac{\lambda_n}{2} \,
    \left(\xi_{n,1}^2+ \xi_{n,2}^2 \right)\, ,
\]
where $(\xi_{n,j})_{n\in\N, j\in \{1,2\}}$  are independent real
standard Gaussian random variables. This formula can be rewritten as
\[
    ||Z||^2= \sum_{n=1}^\infty \lambda_n^* \xi_{n}^{*2},
\]
where
\begin{eqnarray*}
 \lambda_{2n-1}^* &:=& \frac{\lambda_n}{2}; \quad \xi_{2n-1}^*:= \xi_{n,1},
 \\
 \lambda_{2n}^* &:=& \frac{\lambda_n}{2}; \quad \xi_{2n}^*:= \xi_{n,2},
\end{eqnarray*}
for all $n\ge 1$.

A straightforward calculation shows that
$\lambda_n\sim C n^{-r}$ yields $\lambda_n^*\sim 2^{r-1}C n^{-r}$, as $n\to\infty$.
By applying \eqref{sd2} with $2^{r-1}C$ instead of $C$ we obtain the following result.

\begin{thm} \label{t:sd_w_per_compl}
Let $\{X(t), t\in\R\}$ be a $2\pi$-periodic complex centered
mean-square continuous stationary proper Gaussian process.
Assume that its spectral measure satisfies the asymptotic condition \eqref{muk}
with some $r>1$. Let $q$ be a summable weight.

Then we have, as $\eps\to 0$,

\[
   \ln \P\left( \int\limits_0^{2\pi} q(t) |X(t)|^2 dt \le \eps^2\right)
   \sim - \left(\frac{M_-^{\frac 1r}+ M_+^{\frac 1r}}{2r\sin(\pi/r)}
   \int\limits_{0}^{2\pi} q(t)^{\frac 1{r}}  dt\right)^{\frac{r}{r-1}}\,
   \frac{(r-1)(2\pi)^{\frac{1}{r-1}}}{\eps^{\frac{2}{r-1}}}.
\]
\end{thm}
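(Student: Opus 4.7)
The plan is to combine Lemma \ref{Lemma21}, which identifies the eigenvalue asymptotics of $\KK_{\sqrt{q}X}$, with the modified Karhunen--Lo\`eve representation for proper complex Gaussian processes established in the paragraphs immediately preceding the theorem. The proper-Gaussian framework is essential: without it the distribution of $\|\sqrt{q}X\|$ would not be determined by the singular values of $\KK_{\sqrt{q}X}$ alone.

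First I would observe that $\sqrt{q}\,X$ remains a proper complex centered Gaussian element of $L_2([0,2\pi],\nu)$ (multiplication by a deterministic scalar preserves properness), and that Lemma \ref{Lemma21} applies and yields
\[
  \lambda_n(\KK_{\sqrt{q}X}) \sim C\, n^{-r}, \qquad
  C := \left(\frac{M_-^{\frac 1r}+M_+^{\frac 1r}}{2\pi}\int_0^{2\pi} q(t)^{\frac 1r}\,dt\right)^{r}.
\]
Then, by the construction of the $\lambda_n^\ast$ sequence above, we have $\|\sqrt{q}X\|_{2,\nu}^2 = \sum_n \lambda_n^\ast \xi_n^{\ast 2}$ with $(\xi_n^\ast)$ i.i.d.\ standard real Gaussians and $\lambda_n^\ast \sim 2^{r-1} C\, n^{-r}$ as $n\to\infty$.

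Next, since \eqref{2pi} gives $\int_0^{2\pi} q(t)|X(t)|^2\,dt = 2\pi\|\sqrt{q}X\|_{2,\nu}^2$, the event in question is $\{\|\sqrt{q}X\|_{2,\nu}^2 \le \eps^2/(2\pi)\}$. I would apply the classical asymptotic formula \eqref{sd2} to $Z := \sqrt{q}X$ with $C$ replaced by $2^{r-1}C$ and $\eps^2$ replaced by $\eps^2/(2\pi)$, obtaining
\[
  \ln \P\Big(\int_0^{2\pi} q(t)|X(t)|^2\,dt \le \eps^2\Big) \sim -B_r\bigl(2^{r-1}C \cdot 2\pi/\eps^2\bigr)^{1/(r-1)},
\]
with $B_r = \tfrac{r-1}{2}\bigl(\tfrac{\pi}{r\sin(\pi/r)}\bigr)^{r/(r-1)}$.

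The remaining step is a routine algebraic simplification. Using $(2^{r-1})^{1/(r-1)} = 2$ and expanding $C^{1/(r-1)} = \bigl(\tfrac{M_-^{1/r}+M_+^{1/r}}{2\pi}\bigr)^{r/(r-1)} A^{r/(r-1)}$ with $A := \int_0^{2\pi} q(t)^{1/r}\,dt$, one verifies the identity $\tfrac{1}{2}\pi^{1/(r-1)} = (2\pi)^{1/(r-1)}/2^{r/(r-1)}$, which rearranges the expression into the stated form. I do not anticipate any conceptual obstacle here; the only care needed is bookkeeping of the powers of $2$, $\pi$, and $r$ when passing from the real formula \eqref{sd2} to the complex proper version, where the doubling trick for eigenvalues produces exactly the factor of $2$ distinguishing the constant in Theorem \ref{t:sd_w_per_compl} from that in Theorem \ref{t:sd_w_per_real}.
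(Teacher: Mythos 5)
Your proposal is correct and follows essentially the same route as the paper: Lemma \ref{Lemma21} for the eigenvalue asymptotics, the doubling trick $\lambda_n^*\sim 2^{r-1}Cn^{-r}$ for proper complex Gaussian vectors, and then \eqref{sd2} combined with the normalization \eqref{2pi}. The algebra checks out, including the cancellation of $(2^{r-1})^{1/(r-1)}=2$ against the $\tfrac12$ in $B_r$ and the absorption of $\pi/(2\pi)$ into the bracketed constant.
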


\section{Stationary sequences}
\label{s:hln}
\setcounter{equation}{0}

Let a real stationary centered Gaussian sequence $(U_k)_{k\in\Z}$ admit a representation
\be \label{Uk}
   U_k = \sum_{m=-\infty}^\infty a_m X_{k-m},
\ee
where $(a_m)\in \ell_2(\Z)$, and $X_m$ are independent standard Gaussian random variables
(this representation exists iff  $(U_k)$ has a spectral density).

The following result was essentially obtained in \cite{HLN}.

\begin{thm} \label{t:sd_statseq}
Let a real stationary centered Gaussian sequence $(U_k)_{k\in\Z}$ admit a representation
\eqref{Uk}
and let the coefficients $(d_k)_{k\in\Z}$ have the asymptotics
\[
     \lim\limits_{k\to \pm\infty} |k|^p d_k=d_{\pm},\quad  \textrm{for some } p>\frac{1}{2} \, ,
\]
where at least one of the numbers $d_{\pm}$ is strictly positive.
Then, as $\eps\to 0$,
\[
   \ln \P\left( \sum_{k\in\Z} d_k^2 U_k^2 \le \eps^2\right)
   \sim - \left(
      \frac{d_-^{\frac 1p}+ d_+^{\frac 1p}} {4 \, p\, \sin\big(\frac{\pi}{2p}\big)}
      \
      \int\limits_0^{2\pi}
      |\mathfrak a(t)|^{\frac 1p} dt
      \right)^{\frac{2p}{2p-1}}
        \, \frac{2p-1}{2\, \eps^{\frac 2{2p-1}}}\, ,
\]
where ${\mathfrak a}(t)=\sum_{k\in\Z}  a_k\, e^{\i\, k t}$.
\end{thm}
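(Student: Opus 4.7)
The plan is to mirror the two-step strategy of Theorem \ref{t:sd_w_per_real}. First, the sum $\sum_k d_k^2 U_k^2$ equals the squared $\ell_2$-norm of the centered Gaussian vector $Y = (d_k U_k)_{k\in\Z}$, so by Karhunen--Lo\`eve
\[
   \sum_k d_k^2 U_k^2 = \sum_{n\ge 1} \lambda_n(\KK)\,\eta_n^2,
\]
where $(\eta_n)$ are i.i.d.\ standard real Gaussians and $\KK$ is the covariance operator of $Y$ on $\ell_2(\Z)$. The Dembo--Li--Linde formula \eqref{sd2} with $r = 2p$ then reduces the proof to establishing the spectral asymptotics $\lambda_n(\KK) \sim C\,n^{-2p}$ with an explicit constant.

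Writing $U = AX$, where $A$ denotes convolution on $\ell_2(\Z)$ by $(a_m)$ and $X=(X_m)$ is the i.i.d.\ Gaussian sequence, gives $Y = DAX$ with $D$ the diagonal multiplication by $(d_k)$; hence $\KK = DAA^*D$. Under the Fourier isomorphism $\FF : \ell_2(\Z)\to L_2([0,2\pi])$, the operator $A$ becomes multiplication by $\mathfrak a$, while $D$ becomes convolution with $\hat d(t) = \sum_k d_k e^{\i k t}$, an element of $L_2([0,2\pi])$ since $p>\tfrac12$ forces $(d_k)\in\ell_2$. Thus $\KK$ is unitarily equivalent, on $L_2([0,2\pi])$, to $C_{\hat d}\,M_{|\mathfrak a|^2}\,C_{\hat d}$. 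This is the Fourier-dual of the factorization $\QQ\DD\DD\QQ$ in \eqref{decomp-cov}: the roles of ``spectrum'' and ``weight'' are exchanged, with $|\mathfrak a(t)|^2$ now playing the role of the multiplicative weight $q(t)$ and the polynomially-decaying sequence $(d_k^2)$ playing the role of the power spectral measure, under the identifications $r\leftrightarrow 2p$ and $M_\pm\leftrightarrow d_\pm^2$.

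The technical heart, and the step I expect to be the main obstacle, is the eigenvalue asymptotics for $\KK$. I would mirror the Birman--Solomyak argument of Lemma \ref{Lemma21}: extend to an operator on $L_2(\R)$, isolate the singularity of $\hat d$ at the origin on the torus (dual to the ``infinity'' of the spectral measure in Lemma \ref{Lemma21}) via a smooth cut-off $h_0$, introduce a model pseudo-differential operator $\A$ on $L_2(\R)$ whose symbol captures the leading $|\xi|^{-p}$ behaviour of $\hat d$ with asymmetric coefficients $d_\pm$, and apply the Birman--Solomyak propositions from the Appendix together with a Poisson summation argument dual to the one in Lemma \ref{Lemma21}. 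In exact analogy with \eqref{s2}, this yields
\[
   \lambda_n(\KK) \sim \left(\frac{d_-^{\frac{1}{p}} + d_+^{\frac{1}{p}}}{2\pi} \int_0^{2\pi} |\mathfrak a(t)|^{\frac{1}{p}}\,dt\right)^{2p}\,n^{-2p}.
\]

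Finally, substituting this value of $C$ into \eqref{sd2} with $r=2p$ and simplifying using the identity $r\sin(\pi/r) = 2p\sin(\pi/(2p))$ collapses precisely to the formula stated in the theorem. The ``slight strengthening'' over \cite{HLN} presumably comes from the insensitivity of the Birman--Solomyak framework to the symmetry of the symbol, which permits asymmetric coefficients $d_+\neq d_-$ and a sharper range of decay exponents for $(d_k)$ than in \cite{HLN}.
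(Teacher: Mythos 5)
Your outer skeleton matches the paper: reduce via Karhunen--Lo\`eve to the eigenvalue asymptotics of $\KK_Z={\bf DAA^*D}$ and feed $\lambda_n\sim C n^{-2p}$ with $C=\bigl(\tfrac{d_-^{1/p}+d_+^{1/p}}{2\pi}\int_0^{2\pi}|\mathfrak a(t)|^{1/p}dt\bigr)^{2p}$ into \eqref{sd2} with $r=2p$; the final constant you obtain is correct. But the step you yourself flag as ``the main obstacle'' --- the eigenvalue asymptotics --- is exactly where your proposal has a gap. After Fourier transforming you arrive at $C_{\hat d}\,M_{|\mathfrak a|^2}\,C_{\hat d}$, a convolution--multiplication--convolution operator, and you propose to ``mirror'' the Birman--Solomyak argument of Lemma \ref{Lemma21} for it. That argument is built for the opposite configuration (multiplication--Fourier multiplier--multiplication, cf.\ Proposition \ref{BS1}); attacking the convolution-outside form directly is essentially what \cite{HLN} did, and it is what forced the additional assumption for $p<1$ that this theorem is meant to remove. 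Your sketch of the model operator is also internally inconsistent: the singularity of $\hat d$ sits at the origin of the torus (of local order $|t|^{p-1}$ when $p<1$), not at infinity, so there is no ``$|\xi|^{-p}$ behaviour of $\hat d$'' for Proposition \ref{BS1} to latch onto in the form you set up.

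The missing idea is elementary: write $\KK_Z=\TT\TT^*$ with $\TT={\bf DA}$ and use that $\TT\TT^*$ and $\TT^*\TT$ have the same non-zero eigenvalues (\cite[Sec.~2.10, Theorem 5]{BS3}). Since $\TT^*\TT={\bf A^*DDA}$ is multiplication by $\overline{\mathfrak a}$, then convolution with the kernel $\sum_k d_k^2 e^{\i k t}$, then multiplication by $\mathfrak a$, it is exactly the operator $\QQ\DD\DD\QQ$ of \eqref{decomp-cov} with $\mu_k=d_k^2$ and $|\sqrt q|=|\mathfrak a|$. Hence Lemma \ref{Lemma21} applies verbatim with $r=2p>1$, $M_\pm=d_\pm^2$ and $q=|\mathfrak a|^2\in L_1[0,2\pi]$ (automatic from $(a_m)\in\ell_2$), with no need to redo any pseudo-differential analysis and with no restriction beyond $p>\tfrac12$. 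With this one-line swap, your computation of $C$ and the substitution into \eqref{sd2} go through exactly as you wrote them.
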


However, in  \cite{HLN}, for $p<1$ an additional assumption was imposed.
Now we show that it was not necessary, answering the question raised in
\cite[Remark $1.2$]{HLN}.

{\bf Sketch of the proof:}
We have to study the norm of the random vector $Z\in\ell_2(\Z)$ defined by its coordinates
$Z_k= d_k U_k$, $k\in \Z$. It was proved in \cite{HLN} that the corresponding covariance
operator $\KK_Z$ admits a representation
\be\label{KKZ}
    \KK_Z={\bf DAA^*D},
\ee
where $\bf D$ is the convolution operator with the kernel
$\sum_{k \in \Z} d_k\, e^{\i\, k t} $ while $\bf A$ is the multiplication operator related
to the function ${\mathfrak a}(t)$.

We see that the elements of decomposition in (\ref{KKZ})
 are the same as in $(\ref{decomp-cov})$
but the order of use of operators is different.
However, a well-known theorem in operator
theory, see, e.g., \cite[Sec. 2.10, Theorem 5]{BS3}, implies the coincidence of non-zero
eigenvalues for operators $\TT\TT^*$ and $\TT^*\TT$ for any bounded linear operator $\TT$.
Thus, Lemma \ref{Lemma21} implies
that spectral asymptotics of \eqref{s2} type holds for the operator $\KK_Z$
(with the natural replacement $r\to 2p$, $M_{\pm}\to d_{\pm}^2$, $\sqrt{q}\to\mathfrak a$).
Using formula \eqref{sd2} we obtain the claimed small deviation asymptotics.
\hfill $\Box$

\section{Stationary processes with continuous spectra}
\label{s:nonper}
\setcounter{equation}{0}

\subsection{Spectral representations}

Now we consider general aperiodic stationary processes.
Let $X(t),t\in\R$, be a centered second order complex stationary process on $\R$.

The analogue of spectral representation \eqref{specX} is more involved and writes as follows:
\[
   X(t) = \int\limits_{\R}  e^{\i t u} \xi(du),\qquad  t \in \R,
\]
where $\xi(du)$ is an uncorrelated white noise with a control measure $\mu$
called spectral measure of $X$.

The only information about white noise integrals, that we need here is that
the random variable
$\int\limits_{\R}  g(u) \xi(du)$ is well defined and centered
iff $g\in L_2(\R,\mu)$, while for the covariances we have the expression
\[
  \cov\left(\int\limits_{\R} g_1(u)\, \xi(du), \int\limits_{\R}  g_2(u)\, \xi(du) \right)
  = \int\limits_{\R}  g_1(u) \, \overline{g_2(u)}\, \mu(du).
\]
In particular,
\[
   K_X(s-t) := \cov(X(s),X(t))= \int\limits_{\R}  e^{\i u (s-t)}\, \mu(du),
   \qquad s,t\in\R.
\]
We are interested in the small ball behavior of the weighted $L_2$-norm
\[
  \int\limits_{\R} q(t) |X(t)|^2 dt = ||\sqrt{q}\, X||^2_2,
\]
where $q\in L_1(\R)$ is a non-negative weight.

Assume that the spectral measure $\mu$ has a density $m\in L_1(\R)$.
Then it is easy to see that
\[
  (\KK_{\sqrt{q}X}u)(s)=
  \sqrt{q(s)}\int\limits_{\R}K_X(s-t)\sqrt{q(t)}u(t)\,dt
  = 2\pi\sqrt{q(s)}\, \FF^{-1} \big(m(\xi)\FF(\sqrt{q}\,u)(\xi)\big)
\]
(we recall that $\FF$ stands for the Fourier transform).

\subsection{Spectral asymptotics}

From now on we assume that the spectral density $m$ has a power-like decay
analogous to \eqref{muk},
\be \label{muu}
      \lim\limits_{u\to \pm\infty} |u|^r m(u)=M_{\pm},
\ee
with some $r>1$ and $M_{\pm}\ge0$, $M_++M_->0$.

\begin{lem}
Let the spectral density of $X$ satisfy $(\ref{muu})$. Assume that  $q\in L_1(\R)$, and
\be \label{|q|}
 |q|_r:=\sum\limits_{j\in\Z} \|q\|_{L_1(j,j+1)}^{\frac 1r}<\infty.
\ee
Then
\be \label{s2R}
   \lambda_n(\KK_{\sqrt{q}X})=s_n(\KK_{\sqrt{q}X})
   \sim 2\pi\left(\frac{M_-^{\frac 1r}+ M_+^{\frac 1r}}{2\pi}
   \int\limits_{\R} q(t)^{\frac 1r} \,dt\right)^{r}
   \,  n^{-r}, \qquad \textrm{as } n\to\infty.
\ee
\end{lem}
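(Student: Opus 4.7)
The plan is to mirror the proof of Lemma~\ref{Lemma21}, viewing $\KK_{\sqrt{q}X}$ as the pseudodifferential operator
\[
(\KK_{\sqrt{q}X}u)(s)=b(s)\,\FF^{-1}\bigl(2\pi\,m(\xi)\,\FF(bu)(\xi)\bigr),\qquad b=\sqrt{q},
\]
and comparing it with an explicit model operator. Relative to the periodic case, one simplification and one new complication appear: there is no longer any need for the spatial decomposition $K_X=K_++K_-+K^{(0)}$, since $K_X=\FF^{-1}(2\pi m)$ already lives on $\R$ and decays; on the other hand, the weight $b$ is now supported on the whole line, which is exactly what forces the summability condition \eqref{|q|} in order to apply Proposition~\ref{BS1}.

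First I would fix a smooth cutoff $\zeta$ vanishing near $0$ and equal to $1$ for large $|\xi|$, and introduce the model operator with symbol $a_0(\xi)=2\pi\,\zeta(\xi)\,M(\sgn\xi)\,|\xi|^{-r}$:
\[
(\A u)(s)=b(s)\,\FF^{-1}\bigl(a_0(\xi)\,\FF(bu)(\xi)\bigr).
\]
By Proposition~\ref{BS1},
\[
\Delta_{1/r}(\A)=\frac{1}{2\pi}\int_\R\int_\R \ed{q(t)\cdot 2\pi\,M(\sgn\xi)\,|\xi|^{-r}\ge 1}\,d\xi\,dt=(2\pi)^{1/r-1}\bigl(M_+^{1/r}+M_-^{1/r}\bigr)\int_\R q(t)^{1/r}\,dt,
\]
after evaluating the inner $\xi$-integral. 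Raising to the power $r$ matches exactly the constant $2\pi\bigl(\tfrac{M_-^{1/r}+M_+^{1/r}}{2\pi}\int_\R q^{1/r}\bigr)^{r}$ demanded by \eqref{s2R} via \eqref{Nass}.

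Next I would show that the remainder $\KK_{\sqrt{q}X}-\A$ has $\Delta_{1/r}=0$. Its symbol is $2\pi m(\xi)-a_0(\xi)$. At infinity, \eqref{muu} ensures this is $o(|\xi|^{-r})$, so Proposition~\ref{BS2}~part~\textbf{1} kills the high-frequency contribution. Near the origin the symbol coincides with $2\pi m$ times a compactly supported cutoff; since $m\in L_1$ on this bounded set, a separate Birman--Solomyak estimate (again relying on \eqref{|q|}) gives a compact operator whose singular values decay sufficiently fast to yield $\Delta_{1/r}=0$. Proposition~\ref{BS3} then combines both pieces into $\Delta_{1/r}(\KK_{\sqrt{q}X})=\Delta_{1/r}(\A)$, and \eqref{Nass} delivers \eqref{s2R}.

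The main obstacle I expect is the low-frequency part. In the periodic setting the analogous effective symbol was automatically smooth near $\xi=0$ (by construction of $K^{(0)}$), whereas here $m$ is only assumed $L_1(\R)$ and may well be unbounded at the origin, so estimating the singular values of the corresponding weighted operator on all of $\R$ cannot be done by a pointwise bound on the symbol. This is exactly where the quantitative summability of \eqref{|q|} has to be invoked, together with the Birman--Solomyak machinery for weighted operators with merely integrable (rather than bounded) symbols, in order to confirm that this piece contributes nothing to the leading order of the spectral asymptotics.
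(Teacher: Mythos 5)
Your computation of $\Delta_{1/r}$ for the model operator and the final matching of constants are correct, but the central step of your argument is not justified: you apply Proposition~\ref{BS1} to the operator with weight $b=\sqrt{q}$ supported on all of $\R$, whereas that proposition explicitly requires the weights $b,c$ to have \emph{compact} support. The condition \eqref{|q|} does not remove this obstruction --- it does not make $\sqrt q$ compactly supported, and nothing in Proposition~\ref{BS1} as stated covers weights that are merely in $L_2(\R)$ with summable local norms. This is precisely the point the paper's proof is built around: it begins by observing that Proposition~\ref{BS1} \emph{cannot} be applied directly, and the role of \eqref{|q|} is entirely different from the one you assign to it.

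The missing idea is a truncation-plus-perturbation argument. Write $\KK_{\sqrt{q}X}=\widetilde\TT^*\widetilde\TT$ with $\widetilde\TT=\MM\FF\QQ$, where $\MM$, $\QQ$ are multiplications by $\sqrt m$ and $\sqrt q$, and replace $\QQ$ by multiplication by $b_k=\sqrt q\cdot{\mathbf 1}_{[-k,k]}$ to obtain operators $\widetilde\TT_k$ with compactly supported weight, to which the Birman--Solomyak scheme (Propositions~\ref{BS1}, \ref{BS2}, \ref{BS3}) applies and yields the expected value of $\Delta_{1/r}(\KK_k)$ with $\int_{-k}^k q^{1/r}$ in place of $\int_\R q^{1/r}$. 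The error $\widetilde\TT^*-\widetilde\TT_k^*$ is then controlled by Proposition~\ref{BKS}: condition \eqref{muu} gives $|\!|\!| v(\sqrt m)|\!|\!|_{2/r,\infty}<\infty$ and condition \eqref{|q|} says exactly that $v(\sqrt q)\in\ell_{2/r}$, so
\[
\sup_n \bigl(n^{r/2}\, s_n(\widetilde\TT^*-\widetilde\TT_k^*)\bigr)\le \mathrm{const}\cdot |\!|\!| v(\sqrt m)|\!|\!|_{2/r,\infty}\cdot \|v(\sqrt q)-v(b_k)\|_{2/r}\to 0,
\]
and Proposition~\ref{BS3} lets you pass to the limit $k\to\infty$ in the spectral asymptotics. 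This is where \eqref{|q|} is actually used; in your proposal it is invoked only vaguely for the low-frequency part of the symbol, while the unbounded support of the weight --- the genuine new difficulty compared to Lemma~\ref{Lemma21} --- is never addressed.
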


\begin{proof}
  We cannot apply Proposition \ref{BS1} directly since it requires boundedness of the weights supports.
  Therefore, we use subtle estimates of \cite[Sec. 5]{BKS}, see Proposition \ref{BKS}.
  We introduce a decomposition similar to (\ref{decomp-cov}):
  \[
   \KK_{\sqrt{q}X}=\widetilde\TT^*\widetilde\TT, \qquad \widetilde\TT=\MM\FF \QQ,
  \]
where $\MM$ and $\QQ$ stand for the multiplication by $\sqrt{m}\in L_2(\R)$
and $\sqrt{q}\in L_2(\R)$, respectively.

 Following \cite{BKS}, for $f\in L_2(\R)$ we define the numerical sequence
 \be\label{box}
  v(f)=\{v_j(f)\}_{j\in\Z};\qquad  v_j(f):=\|f\|_{L_2(j,j+1)}.
 \ee
Using the notation in Appendix we can write the assumption (\ref{muu}) as follows:
\[
    |\!|\!| v(\sqrt{m})|\!|\!|_{\frac 2r,\infty}<\infty.
\]
Further, the assumption (\ref{|q|}) is equivalent to $v(\sqrt{q})\in \ell_{\frac 2r}$,
and the (quasi)-norm $\|v(\sqrt{q})\|_{\frac 2r}$ coincides with $|q|_r$.

Now we consider the sequence of operators $\widetilde\TT_k=\MM\FF \QQ_k$, $k\in\N$,
where $\QQ_k$ is multiplication by compactly supported weight
  \[
   b_k(t)=\sqrt{q(t)}\cdot{\mathbf 1}_{[-k,k]}(t).
  \]
Obviously, $v(b_k)\to v(\sqrt{q})$ in $\ell_{\frac 2r}$.

Since $\frac 2r<2$, we can apply Proposition \ref{BKS} to the operator
$\widetilde\TT^*-\widetilde\TT_k^*$. This gives
\[
   \sup\limits_n \big(n^{\frac r2} s_n(\widetilde\TT^*-\widetilde\TT_k^*)\big)
   \le const\cdot|\!|\!| v(\sqrt{m})|\!|\!|_{\frac 2r,\infty}\cdot
   \|v(\sqrt{q})-v(b_k)\|_{\frac 2r}\to0 \qquad \textrm{as } k\to \infty.
\]
By (\ref{Nass}) and Proposition \ref{BS3} we infer
\[
   \Delta_{\frac{2}{r}} (\widetilde\TT_k^*)
   \to \Delta_{\frac{2}{r}} (\widetilde\TT^*)
   \qquad \textrm{as } k\to \infty.
\]
Since $\lambda_n(\KK_{\sqrt{q}X})=s_n^2(\widetilde\TT^*)$, this implies
\[
    \Delta_{\frac{1}{r}} (\KK_k)
    \to \Delta_{\frac{1}{r}} (\KK_{\sqrt{q}X})
    \qquad \textrm{as } k\to \infty,
\]
where
\[
    (\KK_k u)(s)= (\widetilde\TT_k^*\widetilde\TT_ku)(s)
    =b_k(s)\int\limits_{\R}K_X(s-t)b_k(t)u(t)\,dt.
\]
The weights $b_k$ satisfy the assumptions of Proposition \ref{BS1}. Using Proposition \ref{BS1}, part {\bf 1} in Proposition \ref{BS2} and the
last statement in Proposition \ref{BS3}, we obtain
\begin{eqnarray*}
   \Delta_{\frac{1}{r}} (\KK_k)
   &=&  \frac 1{2\pi} \int\limits_{\R} \int\limits_{\R\backslash\{0\}}
       \ed{2\pi|b_k(t)|^2\, |M(\sgn(\xi))| \, |\xi|^{-r} \ge 1} \,d\xi dt
   \\
   &=&  \frac 1{2\pi} \int\limits_{-k}^{k}  (2\pi q(t))^{\frac{1}{r}} \,dt
       \left( M(-1)^{\frac{1}{r}}+ M(1)^{\frac{1}{r}} \right)
\end{eqnarray*}
(recall that $M(\pm1)=M_{\pm}$).
We pass to the limit as $k\to\infty$, and the equivalence in \eqref{Nass} yields (\ref{s2R}).
\end{proof}

\subsection{Gaussian small deviations}

\subsubsection{Real processes}

By combining spectral asymptotics \eqref{s2R} with small deviation asymptotics \eqref{sd2}
we immediately obtain the following result.

\begin{thm} \label{t:sd_w_real}
Let $\{X(t), t\in\R\}$ be a  real centered mean-square continuous stationary Gaussian process.
Assume that it has a spectral density satisfying asymptotical condition
\[
    m(u) \sim M |u|^{-r}, \qquad \textrm{as } |u|\to \infty,
\]
with some $r>1, M>0$. Let $q$ be a summable weight satisfying condition $\eqref{|q|}$.

Then we have, as $\eps\to 0$,
\[
   \ln \P\left( \int\limits_{\R} q(t) |X(t)|^2 dt \le \eps^2\right)
   \sim - \left(\frac{M^{\frac 1r}}{r\sin(\pi/r)}
   \int\limits_{\R} q(t)^{\frac 1{r}}  dt\right)^{\frac{r}{r-1}}\,
   \frac{(r-1)(2\pi)^{\frac{1}{r-1}}}{2\, \eps^{\frac{2}{r-1}}} \, .
 \]
\end{thm}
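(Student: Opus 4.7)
The plan is to follow exactly the pattern used for the periodic real case (Theorem \ref{t:sd_w_per_real}), combining the spectral asymptotics with continuous-spectrum eigenvalue behavior from the preceding lemma and the general one-dimensional small-deviation formula \eqref{sd2}. All the substantive analytic work has already been done: the unnamed lemma in this section gives precise eigenvalue asymptotics for $\KK_{\sqrt{q}X}$ under the assumption \eqref{muu} and the sequence-space condition \eqref{|q|}, and the pseudodifferential machinery from \cite{BKS}--\cite{BS2} has been deployed there.

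First, I would view $\sqrt{q}\,X$ as a real centered Gaussian element of the real Hilbert space $L_2(\R)$ (with Lebesgue measure). Since $X$ is real-valued its spectral density satisfies $m(-u)=m(u)$, so assumption \eqref{muu} forces $M_+=M_-=M$. Applying the preceding lemma with this symmetry, we obtain
\[
   \lambda_n(\KK_{\sqrt{q}X})
   \sim 2\pi\left(\frac{2M^{\frac 1r}}{2\pi}\int_{\R}q(t)^{\frac 1r}dt\right)^{r} n^{-r}
   =: C\, n^{-r},\qquad n\to\infty,
\]
i.e.\ the eigenvalues decay like $C n^{-r}$ with an explicit constant $C$.

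Next I would invoke the Karhunen--Lo\`eve expansion in the real Hilbert space $L_2(\R)$: writing the eigenpairs of $\KK_{\sqrt{q}X}$ as $(\lambda_n, e_n)$, we get
\[
  \int_{\R}q(t)|X(t)|^2\,dt=\|\sqrt{q}\,X\|_{L_2(\R)}^2
  =\sum_{n=1}^{\infty}\lambda_n\,\xi_n^2,
\]
with independent standard normal $(\xi_n)$. Because the sequence $(\lambda_n)$ satisfies \eqref{lambda} with the constant $C$ computed above, the Dunker--Lifshits--Linde / Zolotarev result quoted in \eqref{sd2} applies directly and yields
\[
   \ln\P\bigl(\|\sqrt{q}\,X\|_{L_2(\R)}\le\eps\bigr)\sim -B_r\bigl(C/\eps^{2}\bigr)^{\frac 1{r-1}}.
\]

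The last step is purely bookkeeping: substitute $C=2\pi\bigl(\tfrac{2M^{1/r}}{2\pi}\int q^{1/r}\bigr)^{r}$ and $B_r=\tfrac{r-1}{2}\bigl(\tfrac{\pi}{r\sin(\pi/r)}\bigr)^{r/(r-1)}$ and collect the powers of $2\pi$; the factors of $2^r$ and $\pi^r$ combine to give exactly the constant displayed in the statement, namely
\[
   \Bigl(\tfrac{M^{1/r}}{r\sin(\pi/r)}\int_{\R}q(t)^{1/r}dt\Bigr)^{r/(r-1)}\cdot\tfrac{(r-1)(2\pi)^{1/(r-1)}}{2\,\eps^{2/(r-1)}}.
\]
There is no real obstacle here since the preceding lemma already contains all the hard spectral analysis (the subtle estimates from \cite{BKS} are what allow the unbounded support of $q$ via condition \eqref{|q|}); the only thing to verify carefully is the constant consolidation, which parallels the periodic calculation with the one cosmetic difference being the extra factor $2\pi$ coming from the unnormalized Lebesgue measure in the continuous-spectrum Fourier conventions.
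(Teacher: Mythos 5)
Your proposal is correct and follows exactly the paper's route: the paper's entire proof is the one-line remark that Theorem \ref{t:sd_w_real} follows ``by combining spectral asymptotics \eqref{s2R} with small deviation asymptotics \eqref{sd2}'', and you have simply spelled out that combination, including the correct cancellation of the powers of $\pi$ between the constant $C$ from \eqref{s2R} and $B_r$ from \eqref{sd2}. Your constant bookkeeping checks out, so there is nothing to add.
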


Apart from the weight integration domain, the constant in the limit is exactly the same
as in Theorem \ref{t:sd_w_per_real}.

This result has an intersection with that of S. Gengembre \cite{Gen_IRMA}
who considered the non-weighted $L_p$-norm, $1\le p\le +\infty$, on a bounded interval
and the range $1<r<3$ that enables comparison with fractional Ornstein--Uhlenbeck
processes and thus a reduction to the small deviation results on fractional Brownian
motion, cf. \cite{LifLin3}.
We illustrate this connection in the next sub-section.

\subsubsection{Basic example}

Let $H\in(0,1)$ be the fractionality parameter.
Let $W_H$ be a fractional Brownian motion and
let $U_H(t)=e^{-Ht/2}W_H(e^t), t\in \R$, be a fractional
Ornstein-Uhlenbeck (OU) process.
(There are several other ways to extend the classical OU-process
to the fractional case. We refer to \cite{KS} for alternative definitions and further references.)
In other words, it is a real centered Gaussian stationary process with covariance
\[
   K_H(t)=\frac{1}{2} \left( e^{Ht}+  e^{-Ht} -
   \left| e^{t/2}- e^{-t/2}\right|^{2H} \right).
\]
The asymptotic behavior of the corresponding spectral density $m_H(u)$ is well known,
 see e.g. \cite[Proposition 1]{Gen_IRMA},
\be \label{m_OUH}
   m_H(u) \sim \frac{\Gamma(2H+1)\sin(\pi H)}{2\pi} \ |u|^{-2H-1} =: M_H |u|^{-2H-1},
   \qquad \textrm{as } u\to\infty.
\ee
This is essentially due to the behavior of the covariance at the origin,
\[
    K_H(t)= 1-\frac{|t|^{2H}}{2}+ O(|t|^{1+\min\{1,2H\}}),
     \qquad \textrm{as } t\to 0.
\]
It will be also useful for us to consider integrated versions of fractional Brownian motion
and their stationary versions. Let us denote $V_h := W_h$ for $h\in(0,1)$, and define processes $V_h$
for all non-integer positive $h>1$ inductively, by
\[
    V_{h+1}(t):=\int\limits_0^t V_h(s) ds, \qquad t\ge 0.
\]
It is easy to see that the process $V_h$ is $h$-self-similar. Therefore,
$\{U_h(t)=e^{-ht} V_h(e^t), t\in \R\}$
is a stationary process with the covariance function
\[
    K_h(t)= e^{-ht} \E\left( V_h(1)V_h(e^{t})\right), \qquad t\in \R,
\]
for all positive non-integer values of the parameter $h$.
We can also easily find the inductive formula for the spectral measures of $U_h$. Indeed,
for any $h>1, t\in\R$, we have
\[
  U_h'(t)= - h e^{-ht} V_h(e^t) +  e^{-ht}  e^t V_{h-1}(e^t)
         = -h U_h(t)+U_{h-1}(t).
\]
Rewrite this identity as
\[
   U_{h-1}(t) = U_h'(t) + h U_h(t)
\]
and translate it in the language of spectral measures.
Let $\mu_h$ denote the spectral measure of $U_h$.
Recall that $U_h$ has a spectral representation
\[
   U_h=\int_\R e^{itu} Z_h(du)
\]
where $Z_h$ is a centered random measure with orthogonal values on $\R$
such that $\E|Z_h(\cdot)|^2= \mu_h(\cdot)$.
Since
\[
   U_h'(t) =\int_R e^{itu}\  iu\, Z_h(du),
\]
we get
\[
  U_{h-1}(t)= \int_\R e^{itu}  (iu+h) Z_h(du).
\]
By the uniqueness of the spectral representation,
it follows that $(iu+h) Z_h(du)=Z_{h-1}(du)$
and we finally obtain
\[
   \mu_h(du)= \frac{\mu_{h-1}(du)}{|iu+h|^2}
   = \frac{\mu_{h-1}(du)}{u^2+h^2}\,.
\]
It follows from \eqref{m_OUH}
that $\mu_h$ has a spectral density $m_h$ satisfying
\[
   m_h(u) \sim M_H\,|u|^{-2h-1},
   \qquad \textrm{as } u\to\infty.
\]
(Here and elsewhere $H:=\{h\}$ is the fractional part of $h$).

Assuming condition $\eqref{|q|}$ on the weight to hold and applying Theorem \ref{t:sd_w_real}
with $r=2h+1$, $M=M_H$ we obtain
as $\eps\to 0$,
\begin{multline} 
   \ln \P\left( \int\limits_{\R} q(t) |U_h(t)|^2 dt \le \eps^2\right)
\\ \label{sd_Uh}
  \sim - \left(\frac{1}{(2h+1)\sin(\frac{\pi}{2h+1}))}
   \int\limits_{\R} q(t)^{\frac 1{2h+1}}  dt\right)^{\frac{2h+1}{2h}}\,
   \frac{ h\left( \Gamma(2H+1)\sin(\pi H)  \right)^{\frac 1{2h}}}{\eps^{\frac{1}{h}}}.
\end{multline}
In view of the identity
\[
  \int\limits_{\R} q(t)\, |U_h(t)|^2 dt = \int\limits_{0}^\infty \rho(t)\, |V_h(t)|^2 dt
\]
with the weight
\[
 \rho(t):= \frac{q(\ln t)}{t^{2h+1}}\, , \qquad t>0,
\]
formula \eqref{sd_Uh} immediately yields an equivalent result for the weighted $L_2$-norm of
$V_h$.
The small ball asymptotics for the weighted $L_2$-norm of $V_h$ and $U_h$ was obtained in
\cite[Theorems 3.1, 3.3 and 4.2]{NazNik} but only for the weights with bounded support.

One should also mention \cite{Gen_mem, LifLin3} where small deviations of more general
weighted $L_p$-norms, $1\le p\le+\infty$ were studied for fractional Brownian motions
and for Riemann--Liouville processes.

\subsubsection{Proper complex processes}

In all the previous examples the spectral density $m$ satisfied
\eqref{muu} with $M_+ = M_-$. For complex-valued processes this condition may be violated.
By repeating the proof of Theorem \ref{t:sd_w_per_compl} and using asymptotics
\eqref{s2R} we obtain the following analogue of Theorem \ref{t:sd_w_per_compl}
for complex-valued processes with continuous spectra.

\begin{thm} \label{t:sd_w_compl}
Let $\{X(t), t\in\R\}$ be a complex centered mean-square continuous stationary
proper Gaussian process. Assume that it has a spectral density satisfying
the asymptotic condition \eqref{muu} with some $r>1$. Let $q$ be a summable weight
on $\R$ satisfying \eqref{|q|}.

Then we have, as $\eps\to 0$,
\[
   \ln \P\left( \int\limits_0^{2\pi} q(t) |X(t)|^2 dt \le \eps^2\right)
   \sim - \left(\frac{M_-^{\frac 1r}+ M_+^{\frac 1r}}{2r\sin(\pi/r)}
   \ \int\limits_{\R} q(t)^{\frac 1{r}}  dt\right)^{\frac{r}{r-1}}\,
   \frac{(r-1)(2\pi)^{\frac{1}{r-1}}}{\eps^{\frac{2}{r-1}}} \, .
\]
\end{thm}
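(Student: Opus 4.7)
The plan is to mirror the proof of Theorem \ref{t:sd_w_per_compl} almost verbatim, with the periodic eigenvalue asymptotics replaced by the continuous-spectrum asymptotics \eqref{s2R} from the lemma in Section \ref{s:nonper}. Since the properness argument has already been developed in full detail in Section 2.4, the work here is largely bookkeeping.

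First I would view $Z:=\sqrt{q}\,X$ as a proper complex centered Gaussian element of $L_2(\R)$ (properness of $X$ transfers to $Z$ because multiplication by the deterministic weight $\sqrt{q}$ preserves both $\E Y=0$ and $\E Y^2 = 0$ on the span). By the Karhunen--Lo\`eve expansion \eqref{KL-complex} and the key observation from Section 2.4 that in the proper case the coordinates $\xi_n$ are actually \emph{independent} spherically symmetric complex Gaussians, one gets
\[
 \|Z\|^2 \;=\; \sum_{n=1}^\infty \lambda_n^* (\xi_n^*)^2,
\]
where $(\xi_n^*)$ are i.i.d.\ real standard normals and each $\lambda_n(\KK_Z)$ is split into two copies $\lambda_{2n-1}^* = \lambda_{2n}^* = \lambda_n/2$.

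Next I would feed the spectral asymptotics \eqref{s2R} into this doubling. Setting
\[
 C \;=\; 2\pi \left(\frac{M_-^{1/r}+M_+^{1/r}}{2\pi} \int_{\R} q(t)^{1/r}\,dt\right)^{r},
\]
so that $\lambda_n(\KK_Z) \sim C n^{-r}$, a direct calculation (replacing $n$ by $n/2$) gives $\lambda_n^* \sim 2^{r-1}C\,n^{-r}$. I then apply the classical one-dimensional small deviation formula \eqref{sd2} with constant $2^{r-1}C$:
\[
 \ln \P(\|Z\| \le \eps) \;\sim\; -B_r\bigl(2^{r-1}C/\eps^2\bigr)^{1/(r-1)}.
\]
The factor $2^{1/(r-1)}$ coming from $2^{r-1}$ combines with the $1/2$ sitting in $B_r = \tfrac{r-1}{2}\bigl(\pi/(r\sin(\pi/r))\bigr)^{r/(r-1)}$, producing an overall factor of $(r-1)$ (rather than $(r-1)/2$ as in the real case) and an extra factor of $2$ in the denominator inside the bracket — these are exactly the two differences between the real and complex formulas visible when comparing Theorems \ref{t:sd_w_per_real} and \ref{t:sd_w_per_compl}. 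Simplifying yields precisely the asserted expression.

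There is no real obstacle: all substantive work has been done upstream. The only points requiring a line of care are (i) checking that properness descends from $X$ to $\sqrt{q}X$, which is immediate from the definition, and (ii) verifying the asymptotic manipulation $\lambda_n \sim Cn^{-r} \Longrightarrow \lambda_n^* \sim 2^{r-1}C n^{-r}$, which is the same elementary step used at the end of Section 2.4.3. The algebraic reduction of $B_r\bigl(2^{r-1}C\bigr)^{1/(r-1)}$ to the stated constant is the same one that produced Theorem \ref{t:sd_w_per_compl} from Theorem \ref{t:sd_w_per_real}, so no new computation is needed beyond noting that the integration now runs over $\R$.
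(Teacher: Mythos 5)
Your proposal is correct and follows exactly the paper's route: the paper proves this theorem by the one-line remark ``repeating the proof of Theorem \ref{t:sd_w_per_compl} and using asymptotics \eqref{s2R}'', which is precisely the doubling argument $\lambda_n^*=\lambda_{\lceil n/2\rceil}/2\sim 2^{r-1}Cn^{-r}$ followed by \eqref{sd2} that you spell out, and your constant bookkeeping (the factor $2^{1/(r-1)}$ absorbing the $1/2$ in $B_r$) checks out against the stated formula.
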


Apart from the weight integration domain, the constant in the limit is exactly the same
as in Theorem \ref{t:sd_w_per_compl}.

\appendix

\section{\!\!\!\!\!\!ppendix}

Here we collect some statements from \cite{BKS}--\cite{BS3}.
Recall that $\FF: L_2(\R)\to L_2(\R)$ is standard Fourier transform.
For a compact operator $T$ in $L_2(\R)$ we denote by $s_n(T)$ its singular values and by
\[
  N(\lambda;T) :=  \#\{n: s_n(T)\ge \lambda\}
\]
the distribution function of $s_n(T)$. Define
\[
\Delta_\theta(T) = \lim_{\lambda\to 0_+} \lambda^\theta N(\lambda; T).
\]

Denote by $\ell_{\delta}$ and $\ell_{\delta,\infty}$ the spaces of sequences $(x_j)$ (with $j\in\N$ or $j\in\Z$) such that, respectively,
\begin{equation*}
\|(x_j)\|_{\delta} :=\Big(\sum\limits_j|x_j|^{\delta}\Big)^{\frac 1{\delta}}<\infty;\qquad
|\!|\!|(x_j)|\!|\!|_{\delta,\infty} :=\sup\limits_j\big(|j|^{\frac 1{\delta}} |x_j|\big)<\infty.
\end{equation*}

\begin{prop}\label{BS1}(a particular case of \cite[Theorem 1 (b) and Theorem 2]{BS1}). Let
 \begin{equation*}
  (\A u)(s)= b(s)\FF^{-1} \big(a(\xi)\FF(cu)(\xi)\big),
 \label{bs1}\tag{A.1}
 \end{equation*}
where functions $b,c\in L_2(\R)$ have compact supports while $a$ has the form
\[
  a(\xi)=\zeta(\xi)M(\sgn(\xi))|\xi|^{-r},
 \]
here $r>1$, $M:\{-1,1\}\to [0,+\infty)$ and $\zeta$ is a smooth cut-off
 function vanishing in a neighborhood of the origin. Then
\begin{equation*}
   \Delta_{\frac{1}{r}}(\A)=
\frac 1{2\pi} \int\limits_\R \int\limits_{\R\backslash\{0\}}
       \ed{|b(t)|\,|c(t)|\, |M(\sgn(\xi))| \, |\xi|^{-r} \ge 1} \,d\xi dt.
\end{equation*}

\end{prop}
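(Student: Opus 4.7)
The plan is to derive this formula as a direct specialization of the Birman--Solomyak Weyl-type spectral asymptotics for weighted Fourier-multiplier operators, specifically Theorems~1(b) and~2 of \cite{BS1}. First I would identify $\A$ structurally as the composition $\A = B \, a(D) \, C$, where $B$ and $C$ denote multiplication by $b$ and $c$ respectively, and $a(D) := \FF^{-1} a \FF$ is the Fourier multiplier with symbol $a$. Because $b, c \in L_2(\R)$ have compact supports and $|a(\xi)| \lesssim |\xi|^{-r}$ at infinity with $r > 1$, $\A$ is compact of the expected spectral order; in particular $N(\lambda;\A) = O(\lambda^{-1/r})$ as $\lambda \to 0_+$.

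Second, I would reduce to the pure principal symbol. The function $a$ differs from $M(\sgn(\xi)) |\xi|^{-r}$ only on a bounded neighbourhood of the origin, where the latter is singular while $\zeta$ vanishes; away from that neighbourhood the two coincide. A perturbation argument in the spirit of Theorem~2 of \cite{BS1} shows that any modification of the symbol confined to a bounded frequency set produces an operator lying in a strictly smaller singular-value class, and is therefore invisible to $\Delta_{1/r}$. This justifies replacing $a$ by its pure power-type principal part for the purpose of computing the leading asymptotics, at the cost of restricting the $\xi$-integration to $\R \setminus \{0\}$.

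Third, I would invoke the main Weyl-type formula of Theorem~1(b) of \cite{BS1}: for an operator of the form $B a(D) C$ with compactly supported $L_2$-weights and a symbol of power decay, the spectral distribution coefficient equals the phase-space volume
\[
   \Delta_{1/r}(\A) = \frac{1}{2\pi} \int_{\R} \int_{\R \setminus \{0\}}
    \ind_{\{|b(t)|\,|c(t)|\,|a(\xi)| \ge 1\}} \, d\xi \, dt.
\]
Substituting $|a(\xi)| = |M(\sgn(\xi))|\,|\xi|^{-r}$ (valid away from the bounded region dealt with in Step~2) yields exactly the formula in the statement. The factor $\tfrac{1}{2\pi}$ encodes the normalization of the Fourier transform adopted in the paper.

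The main obstacle is the clean verification of the perturbation claim in Step~2: because $M(\sgn(\xi))|\xi|^{-r}$ is singular at $\xi = 0$, one cannot simply drop the cut-off $\zeta$, and one must instead show that the difference operator, whose amplitude has bounded support in $\xi$ but retains the singularity, is a subcritical (weak-Schatten) perturbation of $\A$. This bound, together with the asymptotic formula itself, is exactly what the cited Birman--Solomyak theorems provide, and it is the reason the result is quoted as a \emph{particular case} rather than derived from scratch here.
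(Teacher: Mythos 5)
Your proposal follows essentially the same route as the paper: the paper offers no independent proof of this proposition but simply quotes it as a direct specialization of Theorems~1(b) and~2 of Birman--Solomyak, which is exactly the reduction you describe (identify $\A=B\,a(D)\,C$, dispose of the cut-off $\zeta$ near the origin as a subcritical perturbation via Theorem~2, and read off the phase-volume coefficient from Theorem~1(b)). Your sketch is consistent with the paper's treatment and correctly locates all the substantive work in the cited results.
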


\begin{prop}\label{BS2}(a particular case of \cite[Corollary 4 and Lemma 3]{BS2}).
Let operator $\A$ have the form (\ref{bs1}).

{\bf 1}. Suppose that weight functions $b$ and $c$ satisfy the assumptions of
Proposition \ref{BS1} while $a(\xi)=\zeta(\xi)\psi(\xi)$, where $\zeta$ is
a smooth cut-off function vanishing in a neighborhood of the origin and
$\psi\in L_\infty(\R)$, $\psi(\xi)=o(|\xi|^{-r})$ as $|\xi|\to\infty$, $r>1$.
Then $\Delta_{\frac{1}{r}}(\A)=0$.

{\bf 2}. Suppose that functions $a$, $b$ and $c$ satisfy the assumptions of
Proposition \ref{BS1}. Let ${\rm supp}(b)\subset I_1$, ${\rm supp}(c)\subset I_2$,
where $I_1$ and $I_2$ are closed bounded segments with non-overlapping interiors.
Then $\Delta_{\frac{1}{r}}(\A)=0$.

\end{prop}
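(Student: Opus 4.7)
The plan is to imitate the proof of Theorem \ref{t:sd_w_per_compl} verbatim, substituting the continuous-spectrum eigenvalue asymptotics \eqref{s2R} in place of the periodic asymptotics \eqref{s2}. First I would interpret $Z:=\sqrt{q}\,X$ as a centered proper Gaussian element of the complex Hilbert space $L_2(\R)$, which is legitimate since $\E\|Z\|_2^2 = K_X(0)\int_\R q(t)\,dt<\infty$ and properness is inherited from $X$ (the values $Z(t)$ lie in $\overline{\textup{span}}\{X(t),t\in\R\}$). Applying the Karhunen--Lo\`eve expansion \eqref{KL-complex} and the independence/spherical symmetry of the coefficients established for proper Gaussian sequences, I get
\[
\|Z\|_2^2 \;=\; \sum_{n=1}^\infty \lambda_n\,|\xi_n|^2 \;=\; \sum_{n=1}^\infty \frac{\lambda_n}{2}\bigl(\xi_{n,1}^2+\xi_{n,2}^2\bigr),
\]
with $(\xi_{n,j})_{n\ge 1,\,j\in\{1,2\}}$ i.i.d.\ real standard Gaussians and $\lambda_n=\lambda_n(\KK_{\sqrt{q}X})$.

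Next I would rewrite this sum as $\|Z\|_2^2=\sum_{m\ge 1}\lambda_m^{*}\,(\xi_m^{*})^2$, where the reindexed sequence $\lambda^{*}$ contains each $\lambda_n/2$ twice. By the spectral lemma \eqref{s2R},
\[
\lambda_n \;\sim\; C\,n^{-r},\qquad C\;:=\;2\pi\left(\frac{M_-^{1/r}+M_+^{1/r}}{2\pi}\int_{\R}q(t)^{1/r}\,dt\right)^{\!r},
\]
so the elementary reindexing $\lambda_{2n-1}^{*}=\lambda_{2n}^{*}=\lambda_n/2$ together with $(2n)^{-r}=2^{-r}n^{-r}$ yields $\lambda_m^{*}\sim 2^{r-1}C\,m^{-r}$ as $m\to\infty$. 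At this point the random variable $\|Z\|_2^2$ is a standard infinite-dimensional quadratic form in independent real standard Gaussians with eigenvalues satisfying \eqref{lambda} with constant $C^{*}:=2^{r-1}C$, so the Dunker--Lifshits--Linde / Zolotarev estimate \eqref{sd2} gives
\[
\ln\P(\|Z\|_2\le\eps)\;\sim\;-\,B_r\,\bigl(C^{*}/\eps^2\bigr)^{1/(r-1)},\qquad B_r=\tfrac{r-1}{2}\bigl(\tfrac{\pi}{r\sin(\pi/r)}\bigr)^{r/(r-1)}.
\]

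The remainder is pure bookkeeping. Using $(C^{*})^{1/(r-1)}=2\,C^{1/(r-1)}$, the factor $2$ cancels the $1/2$ in $B_r$, so the leading coefficient acquires an overall factor $(r-1)$ instead of the $(r-1)/2$ seen in the real case. Inserting the explicit value of $C$, the $2\pi$ inside the parenthesis combines with $\pi/r\sin(\pi/r)$ to produce the announced $\bigl(M_-^{1/r}+M_+^{1/r}\bigr)/(2r\sin(\pi/r))$, while the leftover $2\pi$ outside the parenthesis produces the factor $(2\pi)^{1/(r-1)}$. The final expression matches the right-hand side of the theorem exactly.

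I do not anticipate any genuine obstacle: the continuous-spectrum analogues of all tools used in the proof of Theorem \ref{t:sd_w_per_compl} are already in place (properness reduction to the real quadratic-form asymptotics, the doubling $C\mapsto 2^{r-1}C$, and the eigenvalue asymptotics \eqref{s2R}). The only point requiring a modicum of care is the justification that the Karhunen--Lo\`eve coefficients remain independent in the infinite-dimensional complex setting; this is exactly the argument carried out before Theorem \ref{t:sd_w_per_compl}, based on the observation that within $\overline{\textup{span}}\{X(t),t\in T\}$ uncorrelated proper Gaussian variables are automatically independent.
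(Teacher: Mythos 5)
Your proposal does not address the statement at hand. Proposition \ref{BS2} is a spectral--theoretic assertion about the operator $\A u = b\,\FF^{-1}\bigl(a(\xi)\FF(cu)(\xi)\bigr)$: it claims that the quantity $\Delta_{\frac{1}{r}}(\A)=\lim_{\lambda\to 0_+}\lambda^{\frac1r}N(\lambda;\A)$ vanishes in two situations --- (\textbf{1}) when the symbol satisfies $\psi(\xi)=o(|\xi|^{-r})$, and (\textbf{2}) when the two weights $b$ and $c$ have supports in segments with non-overlapping interiors. What you have written instead is a proof of Theorem \ref{t:sd_w_compl}, the small-deviation asymptotics for a complex proper stationary process with continuous spectrum: Karhunen--Lo\`eve expansions, properness, the doubling $C\mapsto 2^{r-1}C$, and the Dunker--Lifshits--Linde/Zolotarev formula \eqref{sd2} are all irrelevant to Proposition \ref{BS2}, and nothing in your argument engages with either the $o(|\xi|^{-r})$ decay of the symbol or the disjointness of the weight supports. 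There is no route from your computation to the conclusion $\Delta_{\frac{1}{r}}(\A)=0$.

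For the record, the paper itself does not prove Proposition \ref{BS2}; it is quoted as a particular case of Corollary 4 and Lemma 3 of Birman and Solomjak \cite{BS2}, and its role in the paper is as an input to the proof of Lemma \ref{Lemma21} (to kill the contributions of $\KK^{(0)}-\A$ and of $\KK_\pm$). A legitimate ``proof'' here would either cite that source or reproduce the Birman--Solomyak estimates for the singular values of weighted pseudo-differential operators. If your intention was to prove Theorem \ref{t:sd_w_compl}, then your argument is essentially the one the paper sketches (repeat the proof of Theorem \ref{t:sd_w_per_compl} with \eqref{s2R} in place of \eqref{s2}), but it should be submitted against that statement, not this one.
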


\begin{prop}\label{BKS}(a particular case of \cite[Subsection 5.7]{BKS}). Let
\begin{equation*}
  (\widetilde\A u)(t)= b(t)\FF^{-1} \big(a(\xi)u(\xi)\big),
\end{equation*}
where $a,b\in L_2(\R)$. Define sequences $v(a)$ and $v(b)$ according to (\ref{box}).

Let $v(a)\in \ell_{\delta,\infty}$, $v(b)\in \ell_{\delta}$ for some $\delta\in (0,2)$. Then $\big(s_j(\widetilde\A)\big)\in\ell_{\delta,\infty}$, and
\begin{equation*}
|\!|\!|\big(s_j(\widetilde\A)\big)|\!|\!|_{\delta,\infty}\le const\cdot |\!|\!|v(a)|\!|\!|_{\delta,\infty}\cdot \|v(b)\|_{\delta},
\end{equation*}
where constant depends on $\delta$.
\end{prop}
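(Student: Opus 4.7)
The plan is to mimic precisely the argument used to derive Theorem \ref{t:sd_w_per_compl} from Lemma \ref{Lemma21}, now replacing the periodic spectral asymptotics by their aperiodic counterpart \eqref{s2R}. Since $\|\sqrt{q}X\|_2^2 = \int_{\R} q(t)|X(t)|^2\,dt$ (the $0$ and $2\pi$ in the displayed integral should read $\R$, as $q\in L_1(\R)$), the problem reduces to analyzing the $L_2(\R)$-norm of a proper Gaussian random element whose covariance operator $\KK_{\sqrt{q}X}$ has already been studied in the preceding lemma.

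First, I would invoke that lemma to obtain
\[
   \lambda_n(\KK_{\sqrt{q}X})\sim C\,n^{-r},\qquad
   C := 2\pi\left(\frac{M_-^{\frac 1r}+M_+^{\frac 1r}}{2\pi}\int_{\R}q(t)^{\frac 1r}dt\right)^{r}.
\]
Next, because $X$ is proper Gaussian and $\XX:=\overline{\textup{span}}\{X(t)\}$ inherits the proper Gaussian property, the argument preceding Theorem \ref{t:sd_w_per_compl} applies verbatim to the Karhunen--Lo\`eve expansion \eqref{KL-complex} of $\sqrt{q}X$: the coefficients $\xi_n$ are independent spherically symmetric complex Gaussians, and splitting into real and imaginary parts gives
\[
   \|\sqrt{q}X\|_2^2 = \sum_{n=1}^\infty \lambda_n^*\,\xi_n^{*\,2},\qquad
   \lambda_{2n-1}^*=\lambda_{2n}^*=\tfrac{\lambda_n}{2},
\]
with $(\xi_n^*)$ i.i.d. real standard Gaussian. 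A direct computation then yields $\lambda_n^*\sim 2^{r-1}C\,n^{-r}$.

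Finally, I would feed this into the classical Dunker--Li--Linde / Zolotarev formula \eqref{sd2}, obtaining
\[
   \ln\P(\|\sqrt{q}X\|_2\le \eps)\sim -B_r\,(2^{r-1}C/\eps^2)^{\frac{1}{r-1}}
\]
with $B_r=\tfrac{r-1}{2}\bigl(\tfrac{\pi}{r\sin(\pi/r)}\bigr)^{\frac{r}{r-1}}$. It remains to insert the explicit value of $C$ and simplify. The factor $2\pi$ in $C$ combines with $2^{r-1}$ and $B_r$ in such a way that the resulting prefactor of $\eps^{-2/(r-1)}$ equals
\[
   \left(\tfrac{M_-^{1/r}+M_+^{1/r}}{2r\sin(\pi/r)}\int_{\R}q^{1/r}\,dt\right)^{\frac{r}{r-1}} (r-1)(2\pi)^{\frac{1}{r-1}},
\]
matching the claim.

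There is essentially no conceptual obstacle in this proof: the spectral asymptotics \eqref{s2R} already contains all of the analytic difficulty (which is handled via the Birman--Solomyak--type estimates in the previous lemma), and the passage from eigenvalues to small deviations is mechanical once the ``proper Gaussian'' doubling trick is in place. The only place where one must be careful is in the final bookkeeping: keeping track of the factor $2\pi$ coming from the $L_2(\R)$ normalization (absent in the periodic case with measure $\nu$) together with the factor $2^{r-1}$ from the doubling, so that the coefficient reproduces the stated one exactly.
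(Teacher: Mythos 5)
There is a fundamental mismatch here: your proposal does not prove the statement it is supposed to prove. The statement is Proposition \ref{BKS}, an operator-theoretic singular value estimate for operators of the form $(\widetilde\A u)(t)= b(t)\FF^{-1}\big(a(\xi)u(\xi)\big)$, asserting that $\big(s_j(\widetilde\A)\big)\in\ell_{\delta,\infty}$ with the quantitative bound
$|\!|\!|\big(s_j(\widetilde\A)\big)|\!|\!|_{\delta,\infty}\le \mathrm{const}\cdot |\!|\!|v(a)|\!|\!|_{\delta,\infty}\cdot \|v(b)\|_{\delta}$ for $\delta\in(0,2)$. What you have written instead is a derivation of the small deviation asymptotics for a proper complex stationary process with continuous spectrum, i.e.\ essentially the proof of Theorem \ref{t:sd_w_compl}. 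Nothing in your argument addresses the quantity $s_j(\widetilde\A)$, the cell decomposition $v_j(f)=\|f\|_{L_2(j,j+1)}$, or the weak-$\ell_\delta$ bound; the Karhunen--Lo\`eve doubling trick and the Dunker--Li--Linde/Zolotarev formula \eqref{sd2} are simply irrelevant to this proposition.

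Worse, the argument you give is circular relative to the statement: you invoke the lemma yielding the eigenvalue asymptotics \eqref{s2R}, but that lemma's proof is exactly where Proposition \ref{BKS} is applied (to control $\widetilde\TT^*-\widetilde\TT_k^*$ when truncating the non-compactly-supported weight $\sqrt q$). So you are assuming a consequence of the proposition in order to prove something downstream of it, while the proposition itself remains untouched. For the record, the paper does not prove Proposition \ref{BKS} either --- it is quoted as a particular case of the Birman--Karadzhov--Solomyak estimates for operators $b(X)a(D)$ from \cite[Subsection 5.7]{BKS}; a genuine proof would require the interpolation/Schatten--von Neumann machinery of that reference, not any probabilistic input.
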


\begin{prop}\label{BS3}(Corollary 5 in \cite[Sec. 11.6]{BS3}). If $\Delta_\theta(T_1)$, $\Delta_\theta(T_2)$ are finite then
\[
 \big|(\Delta_\theta(T_1))^{\frac 1{\theta+1}}- (\Delta_\theta(T_2))^{\frac 1{\theta+1}}\big|\le  (\Delta_\theta(T_1-T_2))^{\frac 1{\theta+1}}.
\]
In particular, if $\Delta_\theta(T_1-T_2)=0$ then
\[
 \Delta_\theta(T_1)=\Delta_\theta(T_2).
\]

\end{prop}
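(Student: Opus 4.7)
The plan is to derive this inequality from Ky Fan's inequality for singular numbers,
$s_{m+n-1}(A+B)\le s_m(A)+s_n(B)$,
applied to the splitting $T_1=(T_1-T_2)+T_2$. By the equivalence (\ref{Nass}) stated before Lemma \ref{Lemma21}, finiteness of $\Delta_\theta(T)$ is equivalent to the singular-value asymptotics $s_n(T)\sim \Delta_\theta(T)^{1/\theta}\, n^{-1/\theta}$, so the whole claim can be rephrased as a statement about Weyl-type asymptotics of $s_n$, and reduces to a single calculus optimization.

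First, I would fix an arbitrary $\alpha\in(0,1)$ and set $m_k:=\lfloor\alpha k\rfloor+1$, $n_k:=k-m_k+1$, so that $m_k+n_k-1=k$ and $m_k/k\to\alpha$, $n_k/k\to 1-\alpha$. Ky Fan's inequality then gives
$s_k(T_1)\le s_{m_k}(T_1-T_2)+s_{n_k}(T_2)$.
Multiplying by $k^{1/\theta}$ and letting $k\to\infty$, using the asymptotic equivalence (\ref{Nass}) for $T_2$ and $T_1$ (and $\limsup$ for $T_1-T_2$, which makes sense even if the defining limit is not yet known to exist), I obtain
$\Delta_\theta(T_1)^{1/\theta}\le \alpha^{-1/\theta}\,\Delta_\theta(T_1-T_2)^{1/\theta}+(1-\alpha)^{-1/\theta}\,\Delta_\theta(T_2)^{1/\theta}$.

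Next, I would optimize the right-hand side over $\alpha\in(0,1)$. Writing $A:=\Delta_\theta(T_1-T_2)^{1/\theta}$ and $B:=\Delta_\theta(T_2)^{1/\theta}$, a one-variable calculation gives the optimal
$\alpha^{*}=A^{\theta/(\theta+1)}\big/\bigl(A^{\theta/(\theta+1)}+B^{\theta/(\theta+1)}\bigr)$,
and the optimum value simplifies to $\bigl(A^{\theta/(\theta+1)}+B^{\theta/(\theta+1)}\bigr)^{(\theta+1)/\theta}$. Substituting and raising to the power $\theta/(\theta+1)$ yields the clean triangle-type bound
$\Delta_\theta(T_1)^{1/(\theta+1)}\le \Delta_\theta(T_1-T_2)^{1/(\theta+1)}+\Delta_\theta(T_2)^{1/(\theta+1)}$.
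Exchanging the roles of $T_1$ and $T_2$, and using $s_n(A)=s_n(-A)$ so that $\Delta_\theta(T_2-T_1)=\Delta_\theta(T_1-T_2)$, gives the mirror inequality; combining the two produces the absolute-value form in the statement. The ``in particular'' consequence is then immediate: $\Delta_\theta(T_1-T_2)=0$ forces $\Delta_\theta(T_1)^{1/(\theta+1)}=\Delta_\theta(T_2)^{1/(\theta+1)}$.

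The main delicate point will be the interaction between the equivalence (\ref{Nass}), which presupposes that $\Delta_\theta$ is a genuine limit, and the appearance of $\Delta_\theta(T_1-T_2)$ on the right, whose existence as a limit is not hypothesized. I would handle this by interpreting $\Delta_\theta$ consistently as $\limsup_{\lambda\to 0_+}\lambda^\theta N(\lambda;T)$ throughout the argument: the Ky Fan inequality is an inequality between singular values and survives this upgrade without modification, and the subsequent optimization is purely algebraic, so the resulting bound remains valid and specialises to the statement whenever the $\Delta_\theta$'s on the left-hand side exist as true limits.
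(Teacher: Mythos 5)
Your argument is correct, but note that the paper itself gives no proof of this proposition: it is stated in the Appendix purely as a citation of Corollary 5 in \cite[Sec.~11.6]{BS3}. Your Ky Fan plus optimization argument is in substance the classical proof behind that cited result (Birman--Solomjak run the same scheme in the language of counting functions, via $N(\lambda_1+\lambda_2;T_1+T_2)\le N(\lambda_1;T_1)+N(\lambda_2;T_2)$ followed by the same optimization over the split of $\lambda$), so you have supplied a self-contained proof where the authors chose to quote the literature. Two small points deserve a sentence in a written-up version: first, the identification $\limsup_{n}\bigl(n^{1/\theta}s_n(T)\bigr)^{\theta}=\limsup_{\lambda\to 0_+}\lambda^{\theta}N(\lambda;T)$, which you invoke implicitly when you pass between the $\limsup$ of $n^{1/\theta}s_n(T_1-T_2)$ and $\Delta_\theta(T_1-T_2)$ read as a $\limsup$, should be checked (it is the standard monotone-rearrangement argument and does hold); second, the optimizer $\alpha^{*}$ is undefined when $A=0$ or $B=0$, so those degenerate cases should be handled by letting $\alpha\to 0$ or $\alpha\to 1$ in the pre-optimized bound --- in particular the case $A=0$ is exactly the one the paper actually uses. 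With those remarks the proof is complete and correct.
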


\subsection*{Acknowledgments}

We are grateful to V. A. Sloushch who provided us with reference \cite{BKS}.
We are also grateful to the anonymous referee and to the Editor for their careful reading and
for the help with our work on the manuscript.

The work was supported by  SPbSU-DFG grant 6.65.37.2017 and by RFBR grant 16-01-00258.



\begin{thebibliography}{1}

\bibitem{AsG}
R.B. Ash, M.F. Gardner, Topics in Stochastic Processes.
\emph{Academic Press}, New York, 1975.

\bibitem{BKS}
M.Sh. Birman, G.E. Karadzhov, M.Z. Solomyak, {\it Boundedness conditions and
spectrum estimates for the operators $b(X)a(D)$ and their analogs}. In:
Estimates and asymptotics for discrete spectra of integral and differential
equations (Leningrad, 1989-90), Adv. Soviet Math., {\bf 7}, AMS, Providence, R.I., 1991, 85--106.




\bibitem{BS1}
M.\v{S}. Birman, M.Z. Solomjak, {\it Asymptotics of the spectrum of
pseudodifferential operators with anisotropic-homogeneous symbols},
Vestnik LGU (1977), no 13, 13--21 (Russian); English transl.: Vestnik Leningrad
Univ. Math. {\bf 10} (1982), 237--247.

\bibitem{BS2}
M.\v{S}. Birman, M.Z. Solomjak, {\it Asymptotics of the spectrum of
pseudodifferential operators with anisotropic-homogeneous
symbols. II}, Vestnik LGU (1979), no 3, 5--10 (Russian).
English transl.: Vestnik Leningrad Univ. Math. {\bf 12} (1980), 155--161.

\bibitem{BS3}
M.S. Birman, M.Z. Solomjak, {\it Spectral Theory of Self-Adjoint Operators
in Hilbert Space}, 2nd edition, revised and extended. Lan', St.Petersburg,
2010 (Russian). English transl. of the 1st ed.: Mathematics and Its
Applications. Soviet Series. {\bf 5}, Kluwer, Dordrecht etc. 1987.

\bibitem{Bro}
J.C. Bronski, {\it Small ball constants and tight eigenvalue asymptotics for
  fractional Brownian motions}, J. Theoret. Probab. \textbf{16} (2003),
  no.~1, 87--100.


\bibitem{DLL} T. Dunker,  M.A. Lifshits,  W. Linde, {\it Small deviations of sums
of independent variables.}
In: {Proc. Conf. High Dimensional Probability}; Ser. Progress in Probability,
{\bf 43}, Birkh\"auser, 1998, 59--74.

\bibitem{GH1}
   Gao, F., Hannig, J., Lee, T.-Y., and Torcaso, F.: Laplace
   transforms via Hadamard factorization with applications to
   small ball probabilities. \emph{Electronic J. Probab.} \textbf{8},
   (2003), paper 13.

\bibitem{GH2}
    Gao, F., Hannig, J., Lee, T.-Y., and Torcaso, F.: Exact
    $L_2$-small balls of Gaussian processes, \emph{J. Theoret. Probab.}
    \textbf{17}, (2004), no.~2, 503--520.

\bibitem{Gen_IRMA}
    S. Gengembre, {\it Probabilit\'es de petites d\'eviations pour les processus
    stationnaires gaussiens}. Publ. IRMA Lille {\bf 60} (2003), no. X, 1--24.

\bibitem{Gen_mem}
    S. Gengembre, {\it Petites d\'eviations pour les processus
    fractionnaires}. Memoire de D.E.A. Universit\'e Lille I, 2002, 19 pp.

\bibitem{HLN}
   S.Y. Hong, M.Lifshits, A.Nazarov, {\it Small deviations in $L_2$-norm for Gaussian
   dependent sequences}, Electronic Comm. Probab. \textbf{21} (2016), no.~41, 1--9.

\bibitem{KS}
   T. Kaarakka, P. Salminen, {\it On fractional Ornstein--Uhlenbeck processes},
   Comm. Stoch. Anal. \textbf{5} (2011), no.~1, 121--133.




\bibitem{LiSha}
  Li, W.V. and Shao, Q.-M.: Gaussian processes: inequalities,
  small ball probabilities and applications, In: Stochastic
  Processes: Theory and Methods, Handbook of Statistics (C.R. Rao
  and D.~Shanbhag, eds.), \textbf{19}, North-Holland/Elsevier,
  Amsterdam, 2001, pp.~533--597.

\bibitem{Lif}
  Lifshits, M.A.: Asymptotic behavior of small ball
  probabilities, In: Probab. Theory and Math. Statist. Proc.
  VII International Vilnius Conference (1998) (B.~Grigelionis, ed.),
  VSP/TEV. Vilnius, 1999, pp.~453--468.

\bibitem{Bib}
  Lifshits, M.A.: Bibliography of small deviation
  probabilities,\quad On the small deviation website
  {\tt http://www.proba.jussieu.fr/pageperso/smalldev/biblio.pdf}

\bibitem{LifLin3}
   M.A. Lifshits, W.~Linde, {\it Small deviations of weighted fractional processes
   and average non-linear approximation}, Trans. Amer. Math. Soc. \textbf{357}
   (2005), 2059--2079.

\bibitem{Naz09}
A.I. Nazarov, {\it Log-level comparison principle for small ball
probabilities}. Statist.\,\&\, Probab. Letters {\bf 79} (2009), no.~4, 481--486.

\bibitem{Naz11}
  A.I. Nazarov, {\it Exact $L_2$-small ball asymptotics of Gaussian processes and the
  spectrum of boundary-value problems}, J. Theor. Probab. {\bf22} (2009), no.~3, 640--665.

\bibitem{NazNik}
  A.I. Nazarov, Ya.Yu. Nikitin, {\it Logarithmic $L_2$-small ball asymptotics for some
  fractional Gaussian processes}, Theory Probab. Appl. {\bf 49} (2004), no.~4, 645--658.

\bibitem{NazPus}
  A.I. Nazarov, R.S. Pusev, Comparison theorems for the small ball probabilities of Gaussian
  processes in weighted $L_2$-norms. Algebra \& Analysis {\bf 25} (2013), no.~3, 131--146
 (Russian); English transl.: St. Petersburg Math. J. {\bf 25} (2014), no.~3, 455--466.

\bibitem{NM}
  F. Neeser and J. Massey, Proper complex random processes with applications to information
  theory, IEEE Transactions on Information Theory {\bf 39} (1993), no.~4, 1293--1302.

\bibitem{Ol}
  E. Ollila, On the circularity of a complex random variable, IEEE Signal Processing Letters
  {\bf 15} (2008), 841--844.

\bibitem{Pus}
  R.S. Pusev, {\it Asymptotics of small deviations of the Bogoliubov processes
  with respect to a quadratic norm}, Theoret. and Math. Phys. {\bf 165} (2010),
  no.~1, 1348--1357.

\bibitem{San}
  D.P. Sankovich, {\it Some properties of functional integrals with respect
  to the Bogoliubov measure}, Theoret. and Math. Phys. {\bf 126} (2001),
  no.~1, 121--135.


\bibitem{Zol}
V.M. Zolotarev, {\it Asymptotic behavior of Gaussian measure in
$\ell_2$}, J. Sov. Math. \textbf{35} (1986), 2330--2334.

\bibitem{Zyg}
A.\,Zygmund. \emph{Trigonometrical Series}, Vol.1, Cambridge University
  Press, Cambridge, 1959.

\end{thebibliography}
\end{document}